\newcommand{\leqnomode}{\tagsleft@true}
\newcommand{\reqnomode}{\tagsleft@false}
\newtheorem{theorem}{Theorem}
\newtheorem{lemma}{Lemma}
\newtheorem{prop}{Proposition}
\newtheorem{claim}{Claim}
\begin{document}

\title{Search for invariant sets of the generalized tent map}
\author[Ayers]{Kimberly Ayers}
\address{Cal State San Marcos, San Marcos, CA}
\author[Dmitrishin]{Dmitriy Dmitrishin}
\address{Odessa National Polytechnic University}
\author[Radunskaya]{Ami Radunskaya}
\address{Pomona College, Claremont, CA}
\author[A.Stokolos]{Alexander Stokolos}
\address{Georgia Southern University}
\author[C.Stokolos]{Constantine Stokolos}
\address{Odessa National Polytechnic University}

\begin{abstract}
This paper describes a predictive control method to search for unstable periodic orbits of the generalized tent map. The invariant set containing periodic orbits  is a repelling set with a complicated Cantor-like structure.  Therefore,
a simple local stabilization of the orbit may not be enough to find a periodic orbit,  due to the small measure of the basin of attraction. It is shown that for 
certain values of the control parameter, both the local behavior and the global behavior of solutions change in the controlled system; in particular, the invariant set enlarges to become an interval or the entire real axis.  The computational particularities of using the control system 
are considered, and necessary conditions for the orbit to be periodic are given. The question of local asymptotic stability of 
subcycles of the controlled system's stable cycles is fully investigated, and some statistical properties of the subset of the classical 
Cantor middle thirds set that  is determined by the periodic points of the generalized tent map are described.

\medskip
\noindent \textit{Keywords.} Generalized tent map, predictive control, periodic orbits, local stabilization, invariant sets.  
\end{abstract}

\maketitle

\section*{Introduction}

In his seminal paper \textit{R. Lozi} noted that numerical computations using computers play a central role in analyzing 
solutions of nonlinear dynamical systems, that computer-aided proofs are complex and necessarily require additional special 
validation of the results \cite{Lozi}. Nevertheless, numerous studies in fields related to chaotic dynamical systems are confident 
in the numerical solutions that they found using popular software, sometimes without carefully checking the reliability of these results.
Computationally, computers store numbers in registers and memory cells with a limited number of digits.  Thus, the set of real numbers represented in the machine is discrete and finite - irrational numbers, and rational numbers with infinite decimal expansions are rounded to decimal expansions that terminate.  This can lead to problems when attempting to numerically find unstable periodic orbits of discrete chaotic dynamical systems.  

Consider the family of chaotic dynamical systems given by the \emph{generalized tent map}:
\begin{equation}\label{eq}
x_{n+1}=f(x_n),\;n=1,\,2,\ldots,
\end{equation}
where 
\begin{equation}\label{tent}
f(x)=H\left(\frac12-\left|x-\frac12\right|\right)=
\left\{\begin{array}{ll}
Hx,\,x\leq\frac12, \\
H(1-x),\,x>\frac12,
\end{array}\right.
\end{equation}
$x\in(-\infty, \; +\infty),$ $H\geq2.$ Note that, despite the relative simplicity of function~(\ref{tent}), 
Equation~(\ref{eq}) is of great theoretical importance and has appeared in several applications \cite{Derr, Goh}.  Consider the classical tent map, given by $H=2$. Note that $x_0=\frac23$ is a fixed point: $f(\frac23)=\frac23$.  Since $|f'(\frac23)|=2>1$, $\frac23$ is an unstable fixed point.  Thus, any amount of rounding error will cause an orbit to eventually diverge from this fixed point, and we can see this computationally. 

In this article, we show how we can correct the computational procedure in the problem of finding unstable periodic orbits 
of a nonlinear discrete system using the tent map as an illustrative example.

The dynamics of even the simplest nonlinear discrete systems can be quite complex \cite{Devaney, Chen, Andr}. Such systems are often characterized 
by extremely unstable motions in phase space, which are defined as chaotic \cite{Devaney}. In dissipative systems, these motions define invariant sets, 
which can be strange attractors or repellers. Trajectories on such invariant sets have positive Lyapunov exponents; therefore, these 
trajectories are exponentially sensitive to the initial conditions. 
Unstable periodic orbits are canonical examples of repelling invariant sets, and we can get insight into the general study of repellers by considering 
 unstable periodic orbits. 
 Periodic orbits have a hierarchical structure determined by their length, which 
makes it possible to calculate various characteristics of invariant sets and their subsets, for example, topological dimension and entropy \cite{Biham}. 
However, when applying numerical methods to search for points along a periodic orbits  we encounter a number 
of fundamental problems. Due to sensitivity to initial conditions and rounding errors, after several calculation steps the results can 
vary greatly depending on the chosen calculation accuracy: the so-called ``butterfly effect'' occurs. Even with a real possibility to choose 
a very high accuracy of calculations, we will never be able to say with certainty what we actually found: a long cycle, a pseudo-cycle or 
a strange attractor \cite{Lozi}.

There are several methods of searching for periodic orbits of a nonlinear discrete system, which can be divided into two groups: methods 
that do not use the correction of the original discrete dynamical system, for example, the method of interval arithmetic analysis \cite{Zyg, Gal2002, Gal2001}, 
or the method connected with the construction of special Hamiltonian systems \cite{Biham}; and methods based on local stabilization of 
an unknown periodic orbit of a given length \cite{Ott, Qian, Aleks, Yang, Miller, DSLS}. The second group of methods is more preferable in the sense that their accuracy increases with the number of iterations, due to the correction of the original dynamical system. If  we 
can locally stabilize the orbit with the help of the control action, then the trajectories of the system will remain in the neighborhood of the orbit and will be attracted to it, 
i.e. the periodic orbit will be ``found". By choosing different initial points, different periodic orbits can be found. To solve the problems of 
stabilization in the search of periodic orbits,  various control schemes were proposed that use information about the states of the controlled 
system at previous points in time \cite{Pyr, Vie, DK, Mor} (delayed control) or about the states of the initial system at future points in time \cite{Polyak, Ushio, Shal, DSI} 
(predictive control). 

The purpose of this paper is to illustrate the effectiveness of the predictive control method using the example of the generalized tent map. 
The invariant sets of the generalized tent map are repellers that have the structure of a Cantor set. We show that in a controlled system, 
along with a change in the local behavior of solutions, the global behavior also changes. We describe the invariant sets of the controlled system, 
which are either a single interval or a union of intervals. Locally asymptotically stable periodic orbits are subsets of the invariant set. The basins of 
attraction of these orbits are also discussed. 

The paper is organized as follows. In Section \ref{sec:background} we present the main result from previous work \cite{DSI}, which substantiates the generalized 
predictive control scheme. Section \ref{sec:ControlSystem} poses the problem of finding a cycle of length $T$, and provides necessary and sufficient conditions
for the local asymptotic stability of this cycle. 

Section \ref{sec:ControlSystem} presents the main theoretical results in the paper:  it provides conditions under which all solutions 
of the controlled system are bounded (Theorem \ref{t2}), or solutions with initial values from a given interval are bounded (Theorem \ref{t3}). We also show that the cycles do not lose the property of local asymptotic stability. In Section \ref{sec:ComputationalParticularities}, some computational particularities of using 
the control system for finding cycles are considered, and  necessary conditions for the found sequence to be a cycle are given. 
In Section \ref{sec:GraphicalStudy}, using the example of stabilization of cycles of small lengths, we consider the relationship between the graphical properties of the original 
map and the map determined by the controlled system. In Section \ref{sec:Subcycles}, the question about local asymptotic stability of subcycles of the controlled 
system's stable cycles is fully investigated (Theorem \ref{t4}). Section \ref{sec:DistributionCyclicPoints} studies subsets of the classical Cantor middle thirds set. Since any point in the Cantor set is a limit of periodic points, we can use the controlled system to visualize the Cantor set itself.
We compare the distribution of periodic points of very long cycles to the distribution of points of the first type, i.e. endpoints of the intervals that make up the complement of the Cantor set.  The images in this paper were created using Maple, and the files that generated them can be found at \cite{code}. 

\section{Background} \label{sec:background}

We consider the discrete system 
\begin{equation}\label{sys}
x_{n+1}=f(x_n),\;x_n\in {\mathbb R}^m,\;n=1,\,2,\ldots,
\end{equation}
where $f(x)$ is, in general, a nonlinear differentiable function from ${\mathbb R}^m$ to $\mathbb{R}^m$.  It is assumed that this system has one or more unstable 
$T$-cycles $\left(\eta_1,\ldots,\,\eta_T\right),$ where all the vectors $\eta_1,\ldots,\,\eta_T$ are distinct, i.e. $\eta_{j+1}=f(\eta_j),$ 
$j=1,\ldots,\,T-1,$ $\eta_1=f(\eta_T).$ Vectors in the cycles are called {\it cyclic points}, and each cycle of length $T$ constitutes a {\it $T$-periodic orbit}. The {\it multipliers} of the 
unstable cycles are defined as the eigenvalues of the products of Jacobi matrices $\prod\limits_{j=1}^T{f'(\eta_{T-j+1})}$ of dimension 
$m\times m$ at the points of the cycle. The matrix $\prod\limits_{j=1}^T{f'(\eta_{T-j+1})}$ is called the Jacobi matrix of the cycle 
$\left(\eta_1,\ldots,\,\eta_T\right).$ The collection of all multipliers $\left\{\mu_1,\ldots,\,\mu_m\right\}$ is called the spectrum of 
the Jacobi matrix. As a rule, the cycles $\left(\eta_1,\ldots,\,\eta_T\right)$ of system~(\ref{sys}) are not known a priori. Consequently, 
the spectrum is not known either.

Consider the control system	
\begin{equation}\label{csys}
x_{n+1}=f\left(\vartheta_1 x_n + \sum\limits_{j=2}^N{\vartheta_j f^{\left((j-1)T\right)}}(x_n)\right).
\end{equation}
The numbers $\vartheta_1,\ldots,\,\vartheta_N$ are real. We can verify that when $\sum\limits_{j=1}^N{\vartheta_j}=1$, 
system~(\ref{csys}) also has a cycle $\left\{\eta_1,\ldots,\,\eta_T\right\}.$ The problem is to choose a parameter $N$ 
and coefficients $\vartheta_1,\ldots,\,\vartheta_N$ so that the cycle $\left\{\eta_1,\ldots,\,\eta_T\right\}$ of system~(\ref{csys}) 
is locally asymptotically stable.

The following result gives a criterion for stability of the cycles in terms of the multipliers. We will see how to use this result to determine a suitable range for the control parameter, $\vartheta$.
\begin{prop}[\cite{DSI} ]\label{Prop1}
Suppose $f\in C^1$ and that system~(\ref{sys}) has an unstable $T$-cycle with multipliers $\left\{\mu_1,\ldots,\,\mu_m\right\}.$
Then this cycle will be a locally asymptotically stable cycle of system~(\ref{csys}) if 
$$
\mu_j \left[r(\mu_j)\right]^T \in D,\;j=1,\ldots,\,m,
$$
where $D=\left\{z\in C:\;|z|<1\right\}$ is an open central unit disc on the complex plane, $r(\mu)=\sum\limits_{j=1}^N{\vartheta_j \mu^{j-1}}.$
\end{prop}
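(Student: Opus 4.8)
The plan is to reduce the local asymptotic stability of the cycle under the controlled map to a spectral radius computation. Write $F(x)=f\bigl(\vartheta_1 x+\sum_{j=2}^N\vartheta_j f^{((j-1)T)}(x)\bigr)$ for the right-hand side of~(\ref{csys}), so that each $\eta_i$ is a fixed point of the $T$-fold iterate $F^{(T)}$. By the standard principle of stability by linearization (the first-approximation theorem), the cycle is locally asymptotically stable provided the spectral radius of the Jacobian $D F^{(T)}(\eta_1)$ is strictly less than $1$; the whole problem therefore comes down to identifying the eigenvalues of this matrix in terms of the multipliers $\mu_j$.

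First I would record that the normalization $\sum_{j=1}^N\vartheta_j=1$, together with the $T$-periodicity identity $f^{(kT)}(\eta_i)=\eta_i$ valid for every $k$, forces the inner argument of $F$ at $\eta_i$ to collapse to $\eta_i$; hence $F(\eta_i)=f(\eta_i)=\eta_{i+1}$ and the cycle indeed persists in~(\ref{csys}). Differentiating $F$ by the chain rule and writing $A_i:=f'(\eta_i)$ and $M_i:=Df^{(T)}(\eta_i)$ for the cyclic product of Jacobians around the orbit (indices mod $T$), the derivatives of the iterate terms become $Df^{((j-1)T)}(\eta_i)=M_i^{\,j-1}$, so that
\begin{equation*}
F'(\eta_i)=A_i\Bigl(\vartheta_1 I+\sum_{j=2}^N\vartheta_j M_i^{\,j-1}\Bigr)=A_i\,r(M_i),
\end{equation*}
where $r(M_i)=\sum_{j=1}^N\vartheta_j M_i^{\,j-1}$ is the matrix polynomial built from $r$.

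The technical heart of the argument is the non-commutativity of the factors in
\begin{equation*}
D F^{(T)}(\eta_1)=F'(\eta_T)F'(\eta_{T-1})\cdots F'(\eta_1)=A_T\,r(M_T)\,A_{T-1}\,r(M_{T-1})\cdots A_1\,r(M_1),
\end{equation*}
since the $M_i$ differ from point to point and need not commute with the $A_i$. I would resolve this with the intertwining identity $A_i M_i=M_{i+1}A_i$, which follows immediately from the definitions of $M_i$ and $M_{i+1}$ as cyclic products sharing the same factors; crucially this requires \emph{no} invertibility of the $A_i$. Applied to each power it gives $A_i\,r(M_i)=r(M_{i+1})\,A_i$, and pushing every polynomial factor to the right telescopes the product down to
\begin{equation*}
D F^{(T)}(\eta_1)=\bigl(A_T A_{T-1}\cdots A_1\bigr)\,\bigl[r(M_1)\bigr]^{T}=M\,[r(M)]^{T},
\end{equation*}
where $M=M_1$ is the Jacobi matrix of the cycle as defined in Section~\ref{sec:background}.

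To finish, since $r(M)$ is a polynomial in $M$, the matrices $M$ and $[r(M)]^{T}$ commute and are simultaneously triangularizable; reading off the diagonal in a common Schur form shows that the multipliers of the cycle in the controlled system are exactly $\mu_j[r(\mu_j)]^{T}$, $j=1,\dots,m$. The hypothesis $\mu_j[r(\mu_j)]^{T}\in D$ is then precisely the statement that this spectral radius is less than $1$, and local asymptotic stability follows from the first-approximation theorem. The one point demanding care is the correct ordering and telescoping of the matrix product; once the intertwining relation is in hand, the remainder is bookkeeping.
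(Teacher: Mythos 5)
The paper itself does not prove Proposition \ref{Prop1}: it is quoted from \cite{DSI}, so there is no in-paper argument to compare yours against. Judged on its own merits, your proof is correct and complete, and it takes the route one would expect the original to take: reduce stability of the cycle to the spectral radius of $DF^{(T)}(\eta_1)$, compute $F'(\eta_i)=f'(\eta_i)\,r\bigl(Df^{(T)}(\eta_i)\bigr)$ by the chain rule (using $\sum_{j}\vartheta_j=1$ so that the inner argument collapses and the cycle persists), and then identify the ordered product of these Jacobians. The genuinely delicate step --- the non-commutativity of the factors $A_i=f'(\eta_i)$ and $M_i=Df^{(T)}(\eta_i)$ --- you handle with the intertwining relation $A_iM_i=M_{i+1}A_i$, which does follow directly from the cyclic structure of the products ($M_i=A_{i+T-1}\cdots A_{i+1}A_i$ with indices mod $T$) and requires no invertibility; it propagates to powers by induction, hence to $A_i\,r(M_i)=r(M_{i+1})\,A_i$, and the telescoping to
\begin{equation*}
DF^{(T)}(\eta_1)=\bigl(A_TA_{T-1}\cdots A_1\bigr)\bigl[r(M_1)\bigr]^{T}=M\,[r(M)]^{T}
\end{equation*}
checks out (I verified the index bookkeeping, e.g.\ $A_2r(M_2)A_1r(M_1)=A_2A_1[r(M_1)]^2$ for $T=2$). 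The final identification of the eigenvalues of $M[r(M)]^T$ as $\mu_j[r(\mu_j)]^T$ is the spectral mapping theorem for the polynomial $z\mapsto z[r(z)]^T$, which your Schur-form argument establishes correctly, and the first-approximation theorem closes the proof. Two minor points worth making explicit in a write-up: first, asymptotic stability of $\eta_1$ as a fixed point of $F^{(T)}$ yields asymptotic stability of the whole cycle because the Jacobians $DF^{(T)}(\eta_i)$ at the other cyclic points are cyclic rearrangements of the same product and so have the same spectrum; second, the normalization $\sum_{j=1}^N\vartheta_j=1$ is stated in the surrounding text of Section \ref{sec:background} rather than inside the proposition, and your proof correctly flags that persistence of the cycle in system~(\ref{csys}) would fail without it.
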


A set $U$ is called {\it invariant} for equation~(\ref{sys}) if for any $x_0\in U$ it follows that $f^{(k)}(x_0)\in U,$ $k=1,\,2,\ldots\,.$ It is shown in \cite{Leonov}
that for $H=3$ the invariant set of equation~(\ref{eq}) is the classical Cantor set. Analogously, it can be shown that when $H>2,$ the invariant set for Equation~(\ref{eq}) 
is a set of the Cantor type, that is an uncountable, closed set with zero Lebesgue measure. Note that each point of the invariant 
set can be represented in the form $\sum\limits_{j=1}^{\infty}{\frac{\alpha_j}{H^j}},$ where $\alpha_j\in\{0,\,H-1\}.$ This set includes a countable subset 
of all periodic points of map~(\ref{tent}). If $x_0$ does not belong to the invariant set, then the corresponding sequence 
$\left\{f^{(k)}(x_0)\right\}_{k=1}^{\infty}$ tends to $-\infty.$ Such invariant sets are called repellers of map~(\ref{tent}). 
The problem we address in this paper is the following: 
for given $T$, can we numerically find $T$-periodic points of the tent map~(\ref{tent}) as limit points of some iterative scheme.

\section{Main results: the control system and global behavior of the control system trajectories} \label{sec:ControlSystem}

To solve the problem of finding unstable periodic orbits of the tent map, we will use the predictive control method, which we now describe explicitly.  Along with equation~(\ref{eq}), consider the equation
\begin{equation}\label{ceq}
x_{n+1}=F(x_n),\;n=1,\,2,\ldots,
\end{equation}
where $F(x)=f\left(\vartheta x + (1-\vartheta)f^{(T)}(x)\right),$ $\vartheta$ is some real number, called the {\it control parameter}, whose value will be determined later.
We will call Equation~(\ref{ceq})  the {\it control system} for equation~(\ref{eq}). The idea is to find the values of $\vartheta$ that stabilize our target periodic cycles.

Let $\left\{\eta_1,\ldots,\,\eta_T\right\}$ be a cycle of  the equation~(\ref{eq}) of length $T$. Since $\vartheta \eta_k + (1-\vartheta)f^{(T)}(\eta_k)=\eta_k,$ then 
$F(\eta_k)=f(\eta_k),$ which means that the cycle of the equation~(\ref{eq}) will also be a cycle of equation~(\ref{ceq}). Note that the converse statement 
is generally not true. 

The multiplier of the equation~(\ref{eq}) cycle is defined by the formula
$$
\mu = f'(\eta_T)\cdot\ldots\cdot f'(\eta_1). 
$$
Since $\left|f'(\eta_j)\right|=H,$ then $|\mu|=H^T>1,$ that is, any cycle of equation~(\ref{eq}) is unstable. Let us find the value of the multiplier 
$\lambda$ of the same cycle $\left\{\eta_1,\ldots,\,\eta_T\right\},$ but for equation~(\ref{ceq}). From Proposition \ref{Prop1} we get that 
$$
\lambda=\mu \left(\vartheta+(1-\vartheta)\mu\right)^T. 
$$

In what follows, we will consider two cases separately: $\mu>0$ and $\mu<0.$ Let $\mu=H^T.$ Then the condition for local asymptotic stability 
of the $T$-cycle of equation~(\ref{ceq}) is: $|\lambda|=\left|H^T \left(\vartheta + (1-\vartheta)H^T\right)^T\right|<1,$ 
from which it follows that 
\begin{equation}\label{inp}
1<
\frac{H^T-\frac{1}{H}}{H^T-1}<
\vartheta<
\frac{H^T+\frac{1}{H}}{H^T-1}<
\frac52.
\end{equation}

If $\mu=-H^T,$ then the condition for local asymptotic stability of the equation~(\ref{ceq}) cycle is: 
$|\lambda|=\left|H^T \left(\vartheta - (1-\vartheta)H^T\right)^T\right|<1,$
from which 
\begin{equation}\label{inn}
\frac12<
\frac{H^T-\frac{1}{H}}{H^T+1}<
\vartheta<
\frac{H^T+\frac{1}{H}}{H^T+1}<
1.
\end{equation}

Thus, Proposition \ref{Prop1} yields the following conditions for asymptotic stability of cycles:

\begin{claim}\label{l1}
Given a $T$-cycle of equation~(\ref{eq}) with the multiplier $\mu.$ This cycle will be a locally asymptotically stable cycle of 
equation~(\ref{ceq}) if inequalities~(\ref{inp}), in the case $\mu>0,$ or inequalities~(\ref{inn}), when $\mu<0,$ are satisfied.
\end{claim}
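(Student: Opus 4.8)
The plan is to read Claim \ref{l1} as a direct consequence of Proposition \ref{Prop1} specialized to the scalar control system~(\ref{ceq}). Here $m=1$ and the control polynomial has degree one: taking $N=2$, $\vartheta_1=\vartheta$ and $\vartheta_2=1-\vartheta$ gives $r(\mu)=\vartheta+(1-\vartheta)\mu$, so Proposition \ref{Prop1} guarantees that the $T$-cycle is a locally asymptotically stable cycle of~(\ref{ceq}) as soon as the quantity
$$
\lambda=\mu\left[r(\mu)\right]^T=\mu\left(\vartheta+(1-\vartheta)\mu\right)^T
$$
lies in the open unit disc $D$, i.e. $|\lambda|<1$. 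Since $f'$ equals $H$ on the branch $x\le\frac12$ and $-H$ on the branch $x>\frac12$, we have $\left|f'(\eta_j)\right|=H$ at every cyclic point, whence $|\mu|=H^T$; thus only the sign of $\mu$ remains to be tracked, and the two cases $\mu=H^T$ and $\mu=-H^T$ exhaust the possibilities.

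Next I would reduce the stability condition $|\lambda|<1$ to a single affine inequality in $\vartheta$. In both cases $|\lambda|=\left(H\,\bigl|\vartheta+(1-\vartheta)\mu\bigr|\right)^T$, and because $t\mapsto t^T$ is strictly increasing on $[0,\infty)$, the condition $|\lambda|<1$ is equivalent to $\bigl|\vartheta+(1-\vartheta)\mu\bigr|<\frac1H$. For $\mu=H^T>0$ this is $\bigl|\vartheta+(1-\vartheta)H^T\bigr|<\frac1H$, and for $\mu=-H^T<0$ it is $\bigl|\vartheta-(1-\vartheta)H^T\bigr|<\frac1H$. Unfolding each absolute value into a two-sided bound then leaves a routine linear solve for $\vartheta$.

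Finally I would solve these two-sided bounds, paying attention to the sign of the coefficient of $\vartheta$. In the positive case the coefficient is $1-H^T<0$ (valid since $H\ge2$), so dividing by it reverses both inequalities and yields exactly the interval in~(\ref{inp}); in the negative case the coefficient is $1+H^T>0$, no reversal occurs, and one obtains~(\ref{inn}). The outer numerical bounds ($1<\cdots<\frac52$ in~(\ref{inp}) and $\frac12<\cdots<1$ in~(\ref{inn})) I would verify by clearing denominators and using $H\ge2$, $T\ge1$; I expect these to be the only delicate spot, since they degenerate to equalities in the corner case $H=2$, $T=1$, so genuine strictness needs $H>2$ or $T\ge2$. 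The rest is sign bookkeeping. Because Proposition \ref{Prop1} supplies only a sufficient condition for stability, the conclusion is correctly stated as an ``if'' rather than an ``if and only if''.
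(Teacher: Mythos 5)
Your proposal is correct and follows essentially the same route as the paper: specialize Proposition \ref{Prop1} to the scalar case with $N=2$, $r(\mu)=\vartheta+(1-\vartheta)\mu$, use $|\mu|=H^T$ to reduce $|\lambda|<1$ to $\bigl|\vartheta+(1-\vartheta)\mu\bigr|<\frac1H$, and solve the resulting linear inequalities to obtain~(\ref{inp}) and~(\ref{inn}). Your remark that the outer decorative bounds ($\frac52$ and $\frac12$) become equalities when $H=2$, $T=1$ is a fair observation, but it does not affect the claim, whose content is the middle inequalities.
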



If there are locally asymptotically stable cycles in equation~(\ref{ceq}), then the invariant set of this equation also includes the basins of 
attraction of these cycles. Since the basin of attraction of a cycle is an open set, its measure is positive. However, there remains the question 
about how large this measure is. This question is important when doing numerical experiments, because we must  judiciously choose the initial point to be in the
 basin of attraction the desired cycle. Thus, as we pass from equation~(\ref{eq}) to equation~(\ref{ceq}), we are ensured that the nature of the periodic orbit (the set of points of the cycle) changes from a repeller into an attractor.  For numericaly tractability, it is also important 
to  know that we have a basin of attraction of sufficiently large measure.

Next, we will establish the properties of invariant sets of equation~(\ref{ceq}) and the global behavior of its solutions. In particular, we will show that under the conditions on the control parameter, $\theta$, given in Claim \ref{l1}, we have globally attracting invariant sets made up of intervals.  

We state these properties as two theorems, one for the case when the multiplier $\mu$ is positive, and one when $\mu$ is negative.  The proofs of the two theorems are broken down into several lemmas, each dealing with a specific range of $\vartheta$ values. 
The two main theorems follow from these lemmas, allowing us to find all cycles of arbitrary 
lengths with any given accuracy. 

\begin{theorem}\label{t2} ({\it Case $\mu > 0$})

If inequalities $\frac{H^T-\frac{1}{H}}{H^T-1}<\vartheta\leq\frac{H^T+\frac1H}{H^T-1}$ are satisfied, then any solution of equation~(\ref{ceq}) is bounded, and its limit set is contained in $\left[0,\,\frac{H}{2}\right]$.  Moreover, any $T$-cycle $\left\{\eta_1,\ldots,\,\eta_T\right\}$ 
of this equation, for which the quantity $\mu = f'(\eta_T)\cdot\ldots\cdot f'(\eta_1)$ is positive, is locally asymptotically stable.
\end{theorem}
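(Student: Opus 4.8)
The plan is to handle the two assertions by different mechanisms and to reduce the local-stability part to Claim~\ref{l1}. Put $a:=\vartheta+(1-\vartheta)H^T=H^T-\vartheta(H^T-1)$, which is strictly decreasing in $\vartheta$; a one-line computation shows that the hypothesis $\frac{H^T-1/H}{H^T-1}<\vartheta\le\frac{H^T+1/H}{H^T-1}$ is precisely the statement $a\in[-\tfrac1H,\tfrac1H)$, i.e.\ $|a|\le\tfrac1H$ with $a=-\tfrac1H$ attained at the right endpoint. For a $T$-cycle with $\mu>0$ one has $\mu=H^T$, so the multiplier formula gives $\lambda=H^T a^T$ and $|\lambda|=(H|a|)^T$. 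On the open interval this is $<1$, and the stability assertion then follows directly from Claim~\ref{l1}; I would flag that at the right endpoint $H|a|=1$ gives $|\lambda|=1$, so stability there is marginal and is not furnished by Proposition~\ref{Prop1}.

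For the global statement the upper bound is free: since $\tfrac H2=f(\tfrac12)$ is the global maximum of $f$, we have $F(x)=f(g(x))\le\tfrac H2$ for every $x$, where $g(x):=\vartheta x+(1-\vartheta)f^{(T)}(x)$. Hence $x_n\le\tfrac H2$ for all $n\ge2$, the half-line $(-\infty,\tfrac H2]$ is forward invariant under $F$, and every limit point is automatically $\le\tfrac H2$. What remains is to prevent escape to $-\infty$ and to push the limit points above $0$. The non-escape is governed by the linear branch: for $x\le0$ all iterates of $f$ stay negative, so $f^{(T)}(x)=H^Tx$ and therefore $g(x)=ax$ with $|g(x)|=|a|\,|x|\le|x|/H$. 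Splitting on whether $ax\le\tfrac12$ and applying the two branches of $f$ yields the uniform estimate $|F(x)|\le H|a|\,|x|\le|x|$ for all $x\le0$, so negative inputs are never expanded. Since $F$ is continuous, $L:=\min_{[0,\,H/2]}F$ is finite, and combining $F([0,\tfrac H2])\subseteq[L,\tfrac H2]$ with $F([L,0])\subseteq[L,\tfrac H2]$ (from $|F(x)|\le|x|\le|L|$) shows that the compact interval $[L,\tfrac H2]$ is forward invariant. On the open interval the factor $H|a|<1$ makes the negatives contract, so this interval is globally attracting and all orbits are bounded; boundedness persists at the endpoint via the additive pull-back $F(x)=H+x$ valid for large negative $x$.

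The delicate point, and the main obstacle, is the containment of the limit set in $[0,\tfrac H2]$, i.e.\ that no limit point is negative. Near the fixed point $0$ the map linearises as $F(x)=(Ha)x$, and on the open interval $|Ha|<1$, so $0$ is attracting and negative excursions are contracted geometrically. The mechanism I would use is that this strict contraction, together with the identity $F(K)=K$ for an $\omega$-limit set $K$ and a quantitative comparison between the depth of a positive ``kick'' into the negatives and the subsequent contraction it undergoes, forces $\min K\ge0$. The obstruction to a quick proof is that on $(0,\tfrac H2]$ the iterate $f^{(T)}$ is a many-branched piecewise-linear map carrying the escaping Cantor structure of $f$, so $F$ genuinely dips below $0$ there (for instance, with $H=2$, $T=2$ the point $0.1$ maps to $-0.1$), and one must show such dips are transient. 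I expect this is exactly why the argument is organised into lemmas over subranges of $\vartheta$, since the admissible excursion depth depends on the location of $a$ in $[-\tfrac1H,\tfrac1H)$.

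Finally, I would stress that the strict inequality is essential for the limit-set containment, even though boundedness holds on the full closed interval. At the right endpoint $H|a|=1$ the negative branch degenerates to the reflection $F(x)=-x$, which supports genuine sign-symmetric cycles: for $H=2$, $T=2$, $\vartheta=\tfrac32$ one checks $F(0.1)=-0.1$ and $F(-0.1)=0.1$, so $\{0.1,-0.1\}$ is a bounded $2$-cycle whose limit set is not contained in $[0,\tfrac H2]$. Thus I would prove the limit-set containment only on the open interval, where the geometric contraction on the negatives is available, and state boundedness on the closed interval.
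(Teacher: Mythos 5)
Your stability half is correct and is exactly the paper's route: the multiplier identity $\lambda=\mu\left(\vartheta+(1-\vartheta)\mu\right)^T=H^T a^T$ with $a=\vartheta+(1-\vartheta)H^T$, reduced to Claim~\ref{l1}/Proposition~\ref{Prop1}; and your flag about the right endpoint is a real defect of the statement, since at $\vartheta=\frac{H^T+\frac1H}{H^T-1}$ one gets $|\lambda|=1$ and the paper's Claim~\ref{l1} and Lemmas~\ref{l2}--\ref{l4} are all stated for strict inequalities only. Your boundedness argument is also sound, and it is genuinely different in organization from the paper's: you prove the single estimate $|F(x)|\le H|a|\,|x|$ for $x\le 0$ and trap orbits in $\left[L,\frac H2\right]$ with $L=\min_{[0,H/2]}F$, whereas the paper splits into three lemmas according to $a=0$ (Lemma~\ref{l2}), $a\in\left(0,\frac1H\right)$ (Lemma~\ref{l3}) and $a\in\left(-\frac1H,0\right)$ (Lemma~\ref{l4}); your version is more economical, the paper's gives finer orbit information (e.g.\ convergence to $0$ of every escaping orbit in Lemma~\ref{l3}).

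The genuine gap is the one you flagged yourself: the containment of limit sets in $\left[0,\frac H2\right]$ is never proved --- you only describe a ``mechanism'' (contraction on the negatives plus invariance of the $\omega$-limit set forces $\min K\ge 0$) together with an obstruction. This gap cannot be closed, because that clause of the theorem is false in the \emph{interior} of the parameter range, not only at the endpoint you exhibited. Take $H=3$, $T=1$, $\vartheta=\frac85$, so $\frac{H^T}{H^T-1}=\frac32<\vartheta<\frac53=\frac{H^T+\frac1H}{H^T-1}$, $a=-\frac15$ and $H|a|=\frac35<1$. Then $f\!\left(\frac{63}{64}\right)=\frac{3}{64}$, so $\vartheta\cdot\frac{63}{64}+(1-\vartheta)\cdot\frac{3}{64}=\frac{99}{64}>1$ and $F\!\left(\frac{63}{64}\right)=3\left(1-\frac{99}{64}\right)=-\frac{105}{64}$; conversely $f\!\left(-\frac{105}{64}\right)=-\frac{315}{64}$, so $\vartheta\cdot\left(-\frac{105}{64}\right)+(1-\vartheta)\cdot\left(-\frac{315}{64}\right)=\frac{21}{64}<\frac12$ and $F\!\left(-\frac{105}{64}\right)=\frac{63}{64}$. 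Hence $\left\{\frac{63}{64},\,-\frac{105}{64}\right\}$ is a bounded $2$-cycle of equation~(\ref{ceq}) whose limit set contains $-\frac{105}{64}\notin\left[0,\frac32\right]$. Your proposed mechanism fails exactly here: the kick into the negatives is \emph{deeper} than the starting point ($\frac{105}{64}>\frac{63}{64}$), and the contraction factor $H|a|=\frac35$ on the return exactly compensates, so the dip is recurrent, not transient. You should not read this as a deficiency of your write-up relative to the paper: the paper's own Lemma~\ref{l4}, which covers this sub-range, proves only that orbits are eventually absorbed into the larger invariant interval $\left[-H^2+\frac H2,\frac H2\right]$ and never establishes limit sets in $\left[0,\frac H2\right]$; that clause of Theorem~\ref{t2} overstates what Lemmas~\ref{l2}--\ref{l4} prove (it is correct for $\vartheta\le\frac{H^T}{H^T-1}$ by Lemmas~\ref{l2} and \ref{l3}, and false beyond). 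So the parts you proved are precisely the parts that are provable.
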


\begin{theorem}\label{t3} ({\it Case $\mu < 0$})
If the inequalities $\frac{H^T-\frac{1}{H}}{H^T+1}<\vartheta\leq\frac{H^T}{H^T+1}$ are satisfied, the set $\left[0,\,\frac{H}{2}\right]$ is an invariant set
of equation~(\ref{ceq}). Moreover, any $T$-cycle $\left\{\eta_1,\ldots,\,\eta_T\right\}$ of this equation, for which the quantity 
$\mu = f'(\eta_T)\cdot\ldots\cdot f'(\eta_1)$ is negative, is locally asymptotically stable.
\end{theorem}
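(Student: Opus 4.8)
The statement has two parts, and the plan is to treat them separately. The asymptotic-stability claim is essentially immediate from Claim~\ref{l1}: the hypothesis $\frac{H^T-1/H}{H^T+1}<\vartheta\le\frac{H^T}{H^T+1}$ places $\vartheta$ inside the open interval~(\ref{inn}), since $\frac{H^T}{H^T+1}<\frac{H^T+1/H}{H^T+1}$, and for a cycle with $\mu<0$ one has $\mu=-H^T$, so Claim~\ref{l1} applies. At the right endpoint $\vartheta=\frac{H^T}{H^T+1}$ the factor $\vartheta-(1-\vartheta)H^T$ vanishes, so the multiplier $\lambda=\mu\bigl(\vartheta-(1-\vartheta)H^T\bigr)^T$ equals $0$ and the cycle is even superstable. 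This part is routine; the substance is the invariance of $I=\left[0,\frac H2\right]$.

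Write $g(x)=\vartheta x+(1-\vartheta)f^{(T)}(x)$, so that $F=f\circ g$. Since the range of $f$ is $\left(-\infty,\frac H2\right]$, the bound $F(x)\le\frac H2$ holds automatically; and since $f(y)\ge0$ precisely when $y\in[0,1]$, the inclusion $F(I)\subseteq I$ is \emph{equivalent} to $g(I)\subseteq[0,1]$. For the admissible range one checks that $\vartheta\in(0,1)$ and that the slopes of $g$ are $\vartheta+(1-\vartheta)H^T\ge\frac{2H^T}{H^T+1}>1$ on the rising pieces of $f^{(T)}$ and $\vartheta-(1-\vartheta)H^T\le0$ on the falling pieces. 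Hence $g$ is continuous and piecewise linear, its local maxima occur exactly at the peaks of $f^{(T)}$ and its local minima at the valleys, and it suffices to control $g$ at these extremal points together with $x=0,\frac H2$ (note $g(0)=0$).

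I would split $I$ at $x=1$. On $\left[1,\frac H2\right]$ every forward iterate is nonpositive—for $x\ge1$, $f(x)=H(1-x)\le0$, and $f$ merely scales negative inputs by $H$—so $f^{(T)}(x)=H^T(1-x)$ is explicitly linear, and $g$ runs linearly from $g(1)=\vartheta\in(0,1)$ to $g\!\left(\frac H2\right)=\frac{\vartheta H}2+(1-\vartheta)H^T\!\left(1-\frac H2\right)$, which a short computation shows is nonnegative (equalling $\frac{H^T}{H^T+1}<1$ at the right endpoint); thus $g\!\left(\left[1,\frac H2\right]\right)\subseteq[0,1]$. On $[0,1]$ an induction shows that every peak of $f^{(T)}$ has the same height $\frac H2$, while the valleys descend to depths $H^{k}\!\left(1-\frac H2\right)$, the deepest being $H^{T-1}\!\left(1-\frac H2\right)$ attained at $x=\frac12$ (via the orbit $\frac12\mapsto\frac H2\mapsto\cdots$). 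The two binding inequalities are therefore the upper bound at the rightmost peak, $\vartheta\,p+(1-\vartheta)\frac H2\le1$ with $p$ the rightmost root of $f^{(T-1)}(x)=\frac12$, and the lower bound at the deepest valley, $\frac\vartheta2+(1-\vartheta)H^{T-1}\!\left(1-\frac H2\right)\ge0$; both are verified by inserting the explicit data and the endpoints of the $\vartheta$-window (for instance the latter equals $\frac{H^{T-1}}{H^T+1}>0$ at $\vartheta=\frac{H^T}{H^T+1}$).

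The hard part is precisely this pair of \emph{tight} inequalities. Both degenerate to equalities as $H\to\infty$: for $T=2$ the rightmost peak sits at $1-\frac1{2H}$, so the upper bound reduces to $\vartheta\ge\frac{H(H-2)}{(H-1)^2}$, which the window's left endpoint $\frac{H^2-1/H}{H^2+1}$ clears only by a margin of order $H^{-3}$, while the deepest-valley bound has margin of order $H^{-1}$. Establishing these uniformly in $T$—where the peak location $p$, the valley depth, and the admissible $\vartheta$-window all move with $T$—is the crux of the invariance statement, and is presumably why the proof is organized into several lemmas, each covering a sub-range of $\vartheta$.
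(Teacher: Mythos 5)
Your stability paragraph is fine and is exactly the paper's argument (Claim~\ref{l1} with $\mu=-H^T$, noting $\frac{H^T}{H^T+1}<\frac{H^T+\frac1H}{H^T+1}$). Your invariance argument, however, takes a genuinely different route from the paper's. The paper (Lemma~\ref{l6}) works pointwise along orbits: for $x_0\in[0,1)$ it introduces the first index $p$ with $f^{(p)}(x_0)>1$, sandwiches $x_0$ by the inequalities~(\ref{mis9}), and bounds $\zeta_0$ separately in the cases $p=T$ and $p<T$; it never analyzes the graph of $\zeta$. You instead reduce invariance to $\zeta\left(\left[0,\frac H2\right]\right)\subseteq[0,1]$, use the sign of the slopes of $\zeta$ to localize the problem at the peaks and valleys of $f^{(T)}$, and arrive at two scalar inequalities. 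That reduction is sound (the equivalence $F(I)\subseteq I\iff\zeta(I)\subseteq[0,1]$, the slope computation, and the claim that the valley at $x=\frac12$ dominates are all correct, though the last needs the scaling remark that the leftmost valley of depth $H^{T-j}\left(1-\frac H2\right)$ sits at $\frac1{2H^{j-1}}$ and so yields the \emph{same} constraint), and it is closer in spirit to the paper's Section~\ref{sec:GraphicalStudy} than to its actual proof; its advantage is that it isolates exactly where the hypothesis on $\vartheta$ enters.

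The genuine gap is precisely where you declare the ``hard part'': you never prove the peak inequality $\vartheta p+(1-\vartheta)\frac H2\le1$ for general $T$ --- you verify it only for $T=2$ and then defer, guessing that the paper's division into lemmas over $\vartheta$-sub-ranges handles it. That guess is wrong: Lemmas~\ref{l2}--\ref{l4} belong to the positive-multiplier Theorem~\ref{t2}, while Theorem~\ref{t3}'s invariance is a single lemma covering the whole window. And the deferred step carries all the quantitative content, because $p<1$ alone is useless (indeed $\vartheta+(1-\vartheta)\frac H2\ge1$); you need the exact gap $1-p$. Fortunately it is a two-line fix: to the right of the rightmost peak, $f^{(T)}$ descends with slope of modulus $H^T$ from $\frac H2$ to $f^{(T)}(1)=0$, so $p=1-\frac1{2H^{T-1}}$. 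Writing $\vartheta=\frac{H^T+\alpha}{H^T+1}$ with $-\frac1H<\alpha\le0$, the peak inequality becomes $\alpha\left(1-\frac1{2H^{T-1}}-\frac H2\right)\le1$, and its left side satisfies $|\alpha|\left(\frac H2-1+\frac1{2H^{T-1}}\right)<\frac1H\left(\frac H2-1+\frac1{2H^{T-1}}\right)<\frac12$, so it holds with room to spare. One further slip: since $\zeta(x)$ at fixed $x$ is affine in $\vartheta$, endpoint checks do suffice, but your valley inequality binds at the \emph{left} endpoint $\alpha=-\frac1H$, not at $\alpha=0$ where you evaluate it; there it equals $\left(\frac12H^{T-1}+H^{T-2}-\frac1{2H}\right)/\left(H^T+1\right)>0$. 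With these computations inserted, your argument is complete and arguably more transparent than the paper's.
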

Theorems \ref{t2} and \ref{t3} will be proved through a sequence of five lemmas, each one dealing with a different sub-case.  Lemmas \ref{l2}-\ref{l4} divide the positive multiplier case into three sub-cases: one where $\vartheta$ is exactly in the middle of the allowed interval (Lemma \ref{l2}), one where $\vartheta$ is in the lower half of the allowed interval (Lemma \ref{l3}) and one where $\vartheta$ is in the upper half of the allowed interval (Lemma \ref{l4}).  The remaining two lemmas (Lemmas \ref{l5} and \ref{l6}) deal with the case of a negative multiplier, $\mu < 0$.

\subsection{Proof of Theorem \ref{t2}}
Throughout the proof, to break down the calculations, we let the function
$$ \zeta(x) = \vartheta x + (1 - \vartheta)f^{(T)} (x) $$
be the intermediary linear combination that appears in the definition of the control function, $F(x)$ given in Equation \eqref{ceq}. Thus, $F(x) = f(\zeta(x))$, and $\zeta_0 = \zeta(x_0)$.

Suppose $\mu > 0$, and let inequalities~(\ref{inp}) be satisfied. Divide the conditions~(\ref{inp}) into three cases: (i) $\vartheta=\frac{H^T}{H^T-1},(ii) $
$\frac{H^T-\frac{1}{H}}{H^T-1}<\vartheta<\frac{H^T}{H^T-1},$ and (iii) $1<\frac{H^T}{H^T-1}<\vartheta<\frac{H^T+\frac{1}{H}}{H^T-1}.$. Each case is treated in a separate lemma.

\begin{lemma}\label{l2}
Let $\vartheta=\frac{H^T}{H^T-1}.$ Then, if $x_0\leq 0,$ it follows that $F(x_0)=0,$ hence, $F^{(k)}(x_0)=0$ when $k=1,\,2,\ldots;$
if $x_0\geq 1,$ then $F(x_0)<0,$ hence, $F^{(k)}(x_0)=0$ when $k=2,\,3,\ldots;$ if $x_0\in(0,\,1),$ then 
$\left\{F^{(k)}(x_0)\right\}_{k=2}^{\infty}\in[0,\,1].$  
\end{lemma}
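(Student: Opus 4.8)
The plan is to follow the three-way split in the statement, dispatching the two outer rays by direct computation and isolating the real work in the middle interval. Throughout I write $\zeta(x)=\vartheta x+(1-\vartheta)f^{(T)}(x)$, so that $F(x)=f(\zeta(x))$, and I record at the outset that for $\vartheta=\frac{H^{T}}{H^{T}-1}$ one has $1-\vartheta=-\frac{1}{H^{T}-1}$, whence $\zeta(x)=\frac{H^{T}x-f^{(T)}(x)}{H^{T}-1}$. The behavior on the two rays comes from the single fact that once an orbit of~\eqref{tent} becomes negative it remains on the linear branch $f(x)=Hx$ and merely scales by $H$ at each step.

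First I would treat $x_{0}\le 0$: since $f(x)=Hx$ for $x\le\frac12$ and negative values map to negative values, $f^{(T)}(x_{0})=H^{T}x_{0}$, so the numerator of $\zeta$ vanishes, $\zeta(x_{0})=0$, and $F(x_{0})=f(0)=0$; as $0$ is fixed this gives $F^{(k)}(x_{0})=0$ for all $k$. Next, for $x_{0}\ge 1$ I would use $f(x)=H(1-x)\le 0$ and the same scaling to get $f^{(T)}(x_{0})=H^{T}(1-x_{0})$, so that $\zeta(x_{0})=\frac{H^{T}(2x_{0}-1)}{H^{T}-1}>1$ and therefore $F(x_{0})=H(1-\zeta(x_{0}))<0$; one further step lands in the previous case, giving $F^{(k)}(x_{0})=0$ for $k\ge 2$.

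The middle interval is the crux, and the key preliminary estimate I would establish is $f^{(T)}(x)\le H^{T}x$ for every $x\ge 0$. This follows by induction on $T$ from the pointwise bound $f(x)\le Hx$ on $[0,\infty)$ (trivial on $[0,\frac12]$, and equivalent to $x\ge\frac12$ on the decreasing branch) together with the scaling of negative iterates, splitting the inductive step according to whether $f(x)$ is nonnegative or negative. The estimate forces $\zeta(x)\ge 0$ on $[0,\infty)$, hence $F(x)=f(\zeta(x))\le\frac{H}{2}$ for every $x\ge 0$, so the forward orbit of any $x_{0}\in(0,1)$ is bounded. I would then run an absorption argument built on the two ray computations: any iterate that is $\le 0$ is sent to $0$ and frozen there; any iterate that is $\ge 1$ produces a negative value at the next step and hence $0$ two steps later; consequently the orbit either remains in $[0,1]$ or is absorbed to the fixed point $0\in[0,1]$, so that its tail lies in $[0,1]$.

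The main obstacle is exactly this middle case. The structural difficulty is that $f^{(T)}$ has on the order of $2^{T}$ linear branches on $(0,1)$, making $\zeta$ and $F$ highly fractured; the inequality $f^{(T)}(x)\le H^{T}x$ is what lets me control $\zeta$ from below uniformly and so avoid a branch-by-branch analysis. The delicate point I would handle with care is the transient: $[0,1]$ is \emph{not} forward invariant under $F$, since $\zeta(x)$ may fall in the band where $f(\zeta(x))>1$ or where $\zeta(x)>1$, so the orbit can briefly overshoot above $1$ or dip below $0$ before the absorption mechanism pins it at $0$. The genuine content of the statement is therefore the boundedness of the orbit together with the containment of its limit set in $[0,1]$, and the absorption trichotomy above is what makes both precise.
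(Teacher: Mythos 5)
Your proof is correct, and its skeleton is the same as the paper's: the two rays are dispatched by the identical computations ($\zeta(x_0)=0$ for $x_0\le 0$; $\zeta(x_0)=\frac{H^T}{H^T-1}(2x_0-1)>1$, hence $F(x_0)<0$, for $x_0\ge 1$), and the middle interval by the absorption dichotomy, which is exactly what the paper asserts in one sentence without argument (``either $F^{(k)}(x_0)=0$ for all $k>k_0$, or the orbit stays in $(0,1)$''), so your absorption paragraph is in effect the proof the paper omits. Two remarks on the differences. First, your inductive estimate $f^{(T)}(x)\le H^T x$ on $[0,\infty)$, and the resulting bound $\zeta(x)\ge 0$ there, is true but not load-bearing: since every iterate $\le 0$ is sent exactly to $0$ (your first case) and every iterate $\ge 1$ is sent to a negative number (your second case), an orbit starting in $(0,1)$ either never leaves $(0,1)$ or, upon first exit, takes at most two values outside $[0,1]$ and is then frozen at $0$; this alone gives boundedness and the containment of the tail, with no lower bound on $\zeta$ needed (note also that $f\le\frac{H}{2}$ holds globally, so $F\le\frac{H}{2}$ does not require $\zeta\ge 0$). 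Second, and this is where your write-up genuinely improves on the paper, you are right that $[0,1]$ is not forward invariant and that the literal claim $\{F^{(k)}(x_0)\}_{k=2}^{\infty}\subset[0,1]$ can fail on a transient: for $H=3$, $T=1$, $\vartheta=\frac32$, the orbit of $x_0=\frac23$ is $\frac23\mapsto\frac32\mapsto-6\mapsto 0\mapsto 0\mapsto\cdots$, so $F^{(2)}(x_0)=-6\notin[0,1]$; similar points exist for every $T$ when $H>2$, since $\zeta$ is continuous with $\zeta(0)=0$ and $\zeta(1)=\frac{H^T}{H^T-1}>1$, so some $x_0\in(0,1)$ has $\zeta(x_0)\in\left(\frac{1}{H},1-\frac{1}{H}\right)$, giving $F(x_0)>1$ and then $F^{(2)}(x_0)<0$. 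The correct reading --- the one your trichotomy and the paper's own dichotomy actually establish, and the only one used downstream, since Theorem \ref{t2} claims only that the limit set lies in $\left[0,\frac{H}{2}\right]$ --- is that the \emph{tail} of every orbit lies in $[0,1]$.
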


\begin{proof}
Let $x_0\leq 0,$ then $f(x_0)=Hx_0\leq 0,$ $f^{(2)}(x_0)=H^2 x_0\leq 0,$ \ldots, $f^{(T)}(x_0)=H^T x_0.$  Find 
$$
\zeta_0 = \zeta(x_0) =\vartheta x_0 + (1-\vartheta)f^{(T)}(x_0)=\frac{1}{H^T-1}\left(H^T x_0 - H^T x_0\right)=0.
$$ 
Hence, $F(x_0)=f(\zeta_0)=0.$

\noindent
 Let $x_0\geq 1,$ then 
$$f(x_0)=H (1-x_0)\leq 0,f^{(2)}(x_0)=H^2 (1-x_0)\leq 0, \dots, f^{(T)}(x_0)=H^T (1-x_0),$$
and
\begin{eqnarray*}
\zeta_0 =\vartheta x_0 + (1-\vartheta)f^{(T)}(x_0)&=&\frac{1}{H^T-1}\left(H^T x_0 - H^T (1-x_0)\right)\\
&=&\frac{H^T}{H^T-1}(2x_0-1)>1,
\end{eqnarray*}
so that
$F(x_0)=f(\zeta_0)=H(1-\zeta_0)<0.$ Hence, $F^{(2)}(x_0)=0.$ 

\noindent
 The last case remains: if $x_0\in(0,\,1)$, then there is a $k_0$ such that  $F^{(k)}(x_0)=0,$ for all $k>k_0$;   or 
$\left\{F^{(k)}(x_0)\right\}_{k=2}^{\infty}\in(0,\,1)$ for all $k$.
\end{proof}

The case $\frac{H^T-\frac{1}{H}}{H^T-1}<\vartheta<\frac{H^T}{H^T-1}$ is considered in a similar way.

\begin{lemma}\label{l3}
Let $\frac{H^T-\frac{1}{H}}{H^T-1}<\vartheta<\frac{H^T}{H^T-1}.$ Then, \\ if $x_0\leq 0$ or $x_0\geq 1,$ 
${F^{(k)}(x_0)\xrightarrow[k\rightarrow\infty]{}0;}$\\ when $x_0\in(0,\,1),$ either $F^{(k)}(x_0)\xrightarrow[k\rightarrow\infty]{}0$
or $\left\{F^{(k)}(x_0)\right\}_{k=1}^{\infty}\in(0,\,1).$
\end{lemma}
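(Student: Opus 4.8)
The plan is to follow the same template as Lemma~\ref{l2}, reducing everything to the behaviour of the orbit near the boundary of $(0,1)$. The key preliminary step is to rewrite the hypothesis on $\vartheta$ in terms of the single constant $c := \vartheta + (1-\vartheta)H^T$, which is the coefficient that governs $\zeta$ on nonpositive inputs. A short computation shows that $\frac{H^T-1/H}{H^T-1}<\vartheta<\frac{H^T}{H^T-1}$ is exactly equivalent to $0<c<\tfrac1H$, i.e. $0<Hc<1$; since $c$ is decreasing in $\vartheta$, the left endpoint corresponds to $c=\tfrac1H$ and the right endpoint (the midpoint case of Lemma~\ref{l2}) to $c=0$. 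This contraction factor $Hc<1$ is what upgrades the ``lands on $0$'' conclusion of Lemma~\ref{l2} into the ``converges to $0$'' conclusion here. I would also record at the outset that $\vartheta>1$, which follows since $\frac{H^T-1/H}{H^T-1}>1$ for $H\ge 2$.

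First I would treat $x_0\le 0$. Because $f$ acts on nonpositive inputs simply by multiplication by $H$, one has $f^{(T)}(x_0)=H^Tx_0$, so $\zeta_0=\zeta(x_0)=c\,x_0\le 0$ and hence $F(x_0)=f(\zeta_0)=H\zeta_0=Hc\,x_0\le 0$. Thus $F$ maps $(-\infty,0]$ into itself, contracting by the factor $Hc<1$, and by induction $F^{(k)}(x_0)=(Hc)^k x_0\to 0$. Next, for $x_0\ge 1$, the first application of $f$ sends $x_0$ to $H(1-x_0)\le 0$, after which the orbit merely multiplies by $H$, giving $f^{(T)}(x_0)=H^T(1-x_0)$ and $\zeta_0=[\vartheta-(1-\vartheta)H^T]x_0+(1-\vartheta)H^T$. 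The main small computation here is to check that $\zeta_0>1$ for every $x_0\ge 1$: the slope $\vartheta-(1-\vartheta)H^T=\vartheta(1+H^T)-H^T$ is positive (indeed $>1$) because $\vartheta>1$, so $\zeta_0$ is increasing in $x_0$, and at $x_0=1$ it equals $\vartheta>1$. Consequently $F(x_0)=H(1-\zeta_0)<0$, which lands the orbit in the region $(-\infty,0]$ already analysed; hence $F^{(k)}(x_0)\to 0$ again.

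Finally, the case $x_0\in(0,1)$ requires no analysis of the genuinely complicated multi-branch dynamics of $f^{(T)}$ on $(0,1)$; the whole point is that it reduces to a dichotomy layered on top of the two boundary cases. Either $F^{(k)}(x_0)\in(0,1)$ for all $k\ge 1$, which is the second alternative in the statement, or there is a least index $k_0\ge 1$ with $F^{(k_0)}(x_0)\notin(0,1)$, that is $F^{(k_0)}(x_0)\le 0$ or $F^{(k_0)}(x_0)\ge 1$. Applying the two boundary cases above to the initial point $F^{(k_0)}(x_0)$ then forces the tail $F^{(k)}(x_0)\to 0$, which is the first alternative. I expect the main obstacle to be essentially bookkeeping: pinning down the equivalence $0<Hc<1$ cleanly and verifying $\zeta_0>1$ uniformly on $[1,\infty)$ in the positive-multiplier regime. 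The conceptual content is just the observation that, once the hypothesis forces $Hc<1$, every exit from $(0,1)$ is irreversible and drains monotonically to $0$, so the interior behaviour is captured entirely by the dichotomy.
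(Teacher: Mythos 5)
Your proposal is correct and follows essentially the same route as the paper's proof: the same three-case decomposition ($x_0\le 0$, $x_0\ge 1$, $x_0\in(0,1)$), the same contraction factor $\alpha = H\left(H^T-\vartheta(H^T-1)\right) = Hc \in (0,1)$ on the nonpositive half-line, the same verification that $\zeta_0>1$ throws points $x_0\ge 1$ into $(-\infty,0]$, and the same exit-dichotomy for the interior. The only differences are cosmetic (you isolate the constant $c$ up front and prove $\zeta_0>1$ by monotonicity in $x_0$ rather than the paper's rearrangement $\zeta_0 = x_0+(\vartheta-1)\left(x_0+H^T(x_0-1)\right)>x_0$).
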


\begin{proof}
Let $x_0\leq 0,$ then $f(x_0)=Hx_0\leq 0,$ \ldots, $f^{(T)}(x_0)=H^T x_0.$ Find 
$$
\zeta_0 =\vartheta x_0 + (1-\vartheta)f^{(T)}(x_0)=\left(H^T - \vartheta (H^T - 1)\right)x_0.
$$
It follows from the inequality $\frac{H^T-\frac{1}{H}}{H^T-1}<\vartheta<\frac{H^T}{H^T-1}$ that, first, $\zeta_0\leq 0,$ 
and 
$$F(x_0)=f(\zeta_0)=H\zeta=H\left(H^T - \vartheta (H^T - 1)\right)x_0\leq 0,$$ and, second: 
$$0<\alpha = H\left(H^T - \vartheta (H^T - 1)\right)<1,$$ from which $F(x_0)=-\alpha|x_0|>-|x_0|.$ Then 
$\left|F^{(k)}(x_0)\right|=\alpha^k |x_0|\xrightarrow[k\rightarrow\infty]{}0.$

Let $x_0\geq 1,$ then $f(x_0)=H(1-x_0)\leq 0,$ \ldots, $f^{(T)}(x_0)=H^T (1-x_0),$

\begin{eqnarray*}
\zeta_0&=&\vartheta x_0 + (1-\vartheta)f^{(T)}(x_0)
\\&=&x_0+(\vartheta-1)\left(x_0 + H^T (x_0-1)\right)\\
&>&x_0>1.
\end{eqnarray*}

Then $F(x_0)=f(\zeta_0)=H(1-\zeta_0) < 0.$ Hence, $$F^{(2)}(x_0)=-\alpha\left|F(x_0)\right|>-\left|F(x_0)\right|, \left|F^{(k)}(x_0)\right|=\alpha^{k-1} \left|F(x_0)\right| \xrightarrow[k\rightarrow\infty]{}0.$$

Let $x_0\in(0,\,1).$ If for some $k_0,$ $F^{(k_0)}(x_0)\leq 0$ or $F^{(k_0)}(x_0)\geq 1,$ then 
$F^{(k)}(x_0)\xrightarrow[k\rightarrow\infty]{}0.$ Otherwise, $\left\{F^{(k)}(x_0)\right\}_{k=1}^{\infty}\in(0,\,1).$
The lemma is proved. 
\end{proof}

The case $\frac{H^T}{H^T-1}<\vartheta<\frac{H^T+\frac{1}{H}}{H^T-1}$ remains.

\begin{lemma}\label{l4}
Let $\frac{H^T}{H^T-1}<\vartheta<\frac{H^T+\frac{1}{H}}{H^T-1}.$ In this case, if $-H^2+\frac{H}{2}\leq x_0 \leq \frac{H}{2},$ 
then
\begin{equation}\label{inl4}
-H^2+\frac{H}{2}\leq F^{(k)}(x_0) \leq \frac{H}{2},\;k=1,\,2,\ldots\,.
\end{equation}
If $x_0>\frac{H}{2}$ or $x_0<-H^2+\frac{H}{2},$ there exists a number $k_0\ge 0$ such that inequalities~(\ref{inl4}) are satisfied 
for all $k$ greater than $k_0.$ 
\end{lemma}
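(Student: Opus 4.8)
The plan is to establish the invariance $F(I)\subseteq I$ of the interval $I=[-H^2+\tfrac H2,\tfrac H2]$ and then dispose of the two exterior cases by a contraction argument. The upper bound in~(\ref{inl4}) comes for free: since $f(y)=H(\tfrac12-|y-\tfrac12|)\le\tfrac H2$ for every $y$, we have $F(x)=f(\zeta(x))\le\tfrac H2$ for all $x$, so the right inequality in~(\ref{inl4}) holds for every $k\ge1$ and every $x_0$. Everything therefore reduces to the lower bound $F(x)\ge-H^2+\tfrac H2$. Reading off the two linear branches of $f$ shows $f(y)\ge-H^2+\tfrac H2$ exactly when $y\in[-H+\tfrac12,\,H+\tfrac12]$, so it suffices to prove $\zeta(x)\in[-H+\tfrac12,\,H+\tfrac12]$ for all $x\in I$; invariance follows, and the first assertion by induction on $k$.

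To bound $\zeta$ on $I$, I would split $I$ into $[-H^2+\tfrac H2,0]$, $(0,1)$, and $[1,\tfrac H2]$ (the last is nonempty since $H\ge2$). On the two outer pieces $f^{(T)}$ is given by the same explicit linear formulas already used in Lemmas~\ref{l2}--\ref{l3}: $f^{(T)}(x)=H^Tx$ for $x\le0$ and $f^{(T)}(x)=H^T(1-x)$ for $x\ge1$. On $[-H^2+\tfrac H2,0]$ this gives $\zeta(x)=\bigl(H^T-\vartheta(H^T-1)\bigr)x$, whose coefficient lies in $(-\tfrac1H,0)$ by~(\ref{inp}); together with $|x|\le H^2-\tfrac H2$ this puts $\zeta(x)\in[0,H-\tfrac12]$. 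On $[1,\tfrac H2]$ the function $\zeta$ is affine and increasing, with $\zeta(1)=\vartheta>1$, while a direct computation at the right endpoint $x=\tfrac H2$ gives $\zeta(\tfrac H2)\le H+\tfrac12$ throughout the admissible $\vartheta$-range (the boundary case $T=1$, $\vartheta=\tfrac{H^T+1/H}{H^T-1}$ being extremal, where equality holds); the strict inequality on $\vartheta$ then makes this strict. Both outer pieces thus land in $[-H+\tfrac12,H+\tfrac12]$.

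The essential difficulty is the middle piece $x\in(0,1)$, where $f^{(T)}$ is piecewise linear with slopes $\pm H^T$ and attains exponentially large negative values, so a naive estimate of $\zeta(x)=\vartheta x+(1-\vartheta)f^{(T)}(x)$ is dangerous (elsewhere a cancellation between $x$ and $f^{(T)}(x)$ had to be used). The saving feature is that $\delta:=\vartheta-1$ is of order $H^{-T}$. I would first prove by induction on $T$ that $f^{(T)}([0,1])=[m_T,\tfrac H2]$ with $m_T=-H^{T-1}\tfrac{H-2}{2}$, the inductive step $f([m_T,\tfrac H2])=[Hm_T,\tfrac H2]=[m_{T+1},\tfrac H2]$ using $H\ge2$ (note $m_T=0$ when $H=2$ or $T=1$, so the excursion is genuine only for $H>2$, $T\ge2$). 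Writing $\zeta(x)=\vartheta x-\delta f^{(T)}(x)$ with $x\in[0,1]$ and $f^{(T)}(x)\in[m_T,\tfrac H2]$, the one-sided estimates $\zeta(x)\le\vartheta+\delta|m_T|$ and $\zeta(x)\ge-\delta\tfrac H2$ can be checked against $H+\tfrac12$ and $-H+\tfrac12$ using $\delta<\tfrac{1+1/H}{H^T-1}$; the upper one is binding and reduces to a polynomial inequality in $H$ valid for all $H\ge2$, $T\ge1$. This yields $\zeta(I)\subseteq[-H+\tfrac12,H+\tfrac12]$ and hence $F(I)\subseteq I$.

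For the exterior assertion, if $x_0>\tfrac H2$ then $x_0\ge1$, so one step of the $[1,\tfrac H2]$-computation (now without the cap) gives a point $\le\tfrac H2$ that is either already in $I$ or, when $\zeta(x_0)>H+\tfrac12$, strictly to the left of $I$, i.e. below $-H^2+\tfrac H2$. It thus remains to treat $x<-H^2+\tfrac H2$: here $x<0$, so $f^{(T)}(x)=H^Tx$ and $\zeta(x)=cx$ with $c=H^T-\vartheta(H^T-1)\in(-\tfrac1H,0)$, whence $|F(x)|=|f(|c|\,|x|)|\le H|c|\,|x|$ with $H|c|<1$. Since $F\le\tfrac H2$ prevents the orbit from ever lying to the right of $I$, the geometric decay $|F^{(k)}(x_0)|\le(H|c|)^k|x_0|$ (valid as long as the orbit stays to the left of $I$, where the linear formula applies) forces it to exit that region after finitely many steps, at which moment it lies in $I$; invariance then traps it, giving the required $k_0$. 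The main obstacle throughout is the middle-piece estimate---controlling $\zeta$ on $(0,1)$, where $f^{(T)}$ is wild but $\delta$ is correspondingly tiny---so the induction for $m_T$ and the resulting polynomial inequality are where the real work lies.
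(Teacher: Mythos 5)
Your proposal is correct, and it follows the same basic strategy as the paper's proof: both arguments rest on the universal cap $F(x)=f(\zeta(x))\le\frac{H}{2}$, on the exact linear formulas $f^{(T)}(x)=H^Tx$ for $x\le 0$ and $f^{(T)}(x)=H^T(1-x)$ for $x\ge 1$, and on the key threshold $\zeta\le H+\frac12$ on the core interval, where the smallness of $\vartheta-1$ (of order $H^{-T}$, by~(\ref{inp})) offsets the exponentially large negative excursion of $f^{(T)}$. The differences are organizational, and each buys something. First, you phrase everything as invariance of $I=[-H^2+\frac{H}{2},\frac{H}{2}]$ via the preimage characterization $f(y)\ge -H^2+\frac{H}{2}$ iff $y\in[-H+\frac12,\,H+\frac12]$, which isolates exactly what must be proved about $\zeta$; the paper instead bounds $F$ case by case and assembles the invariance only implicitly. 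Second, you split $[0,\frac{H}{2}]$ at $x=1$, using the range bound $f^{(T)}\ge m_T=-H^{T-1}\frac{H-2}{2}$ on $(0,1)$ together with the exact affine formula on $[1,\frac{H}{2}]$; the paper treats $[0,\frac{H}{2}]$ in one stroke with the cruder range $f^{(T)}\ge H^T\left(1-\frac{H}{2}\right)$ and the estimate $\vartheta x_0\le\vartheta\frac{H}{2}$, arriving at the same threshold $H+\frac12$ (tight at $T=1$ in both treatments). Third, for far-left initial points the paper shows an additive gain, $F^{(k+1)}(x_0)>H-\left|F^{(k)}(x_0)\right|$, so the orbit climbs by at least $H$ per step until it lands in $[-H,0]$, while you use the multiplicative contraction $|F(x)|\le H|c|\,|x|$ with $c=H^T-\vartheta(H^T-1)$ and $H|c|<1$, which handles the whole region $x<0$ uniformly; both mechanisms finish the exterior case. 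Two blemishes in your write-up, neither a genuine gap: your formula for $m_T$ fails at $T=1$ (the minimum of $f$ on $[0,1]$ is $0$, not $-\frac{H-2}{2}$), but only the one-sided bound $f^{(T)}\ge m_T$ is ever used and that still holds; and the polynomial inequality you assert without displaying does check out --- with $\delta=\vartheta-1$, the binding estimate $\vartheta+\delta|m_T|\le H+\frac12$ reduces at $T=1$ to $2H^2(H-2)\ge 0$ and is strict for all $T\ge 2$, $H\ge 2$ --- so your middle-piece bound is valid exactly as claimed.
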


\begin{proof}
Let $x_0\leq 0,$ then $f(x_0)=Hx_0\leq 0,$ \ldots, $f^{(T)}(x_0)=H^T x_0,$
$$
\zeta_0=\left(H^T - \vartheta (H^T - 1)\right)x_0.
$$
The inequality $\displaystyle{\frac{H^T}{H^T-1}<\vartheta<\frac{H^T+\frac{1}{H}}{H^T-1}}$ is equivalent to 
$$-\frac{1}{H}<H^T-\vartheta(H^T-1)<0,$$ from which it follows that $0\leq\zeta_0\leq\frac{|x_0|}{H}.$ If $-H\leq x_0\leq 0,$ then 
$0\leq\zeta_0\leq 1$ and $0\leq F(x_0)\leq \frac{H}{2}.$

If $x_0<-H$ and $\zeta_0\geq 1,$ then $0\geq F(x_0)=H(1-\zeta_0)>H-|x_0|.$ Moreover, if $F^{(k)}(x_0)<-H$ and 
$\left(H^T - \vartheta (H^T - 1)\right)F^{(k)}(x_0)\geq 1,$ then $0\geq F^{(k+1)}(x_0)>H-\left|F^{(k)}(x_0)\right|>(k+1)H-|x_0|.$
It means that there exists such $k_0$ that $-H\leq F^{(k_0)}(x_0)\leq 0.$ And hence, $0\leq F^{(k_0+1)}(x_0)\leq\frac{H}{2}.$

Let $0\leq x_0\leq \frac{H}{2},$ then $H^T \left(1-\frac{H}{2}\right)\leq f^{(T)}(x_0)\leq\frac{H}{2}.$ Since $\vartheta>1,$ then
\begin{eqnarray*}
-\frac34 H &<&\vartheta x_0 -(\vartheta-1)\frac{H}{2} \\
 &\leq& \zeta_0 \\
&\leq& \vartheta x_0 + (\vartheta-1)H^T\left(\frac{H}{2}-1\right)  \\
&<& \vartheta\frac{H}{2} + (\vartheta-1)H^T\left(\frac{H}{2}-1\right) \\
&<&\frac{1}{H^T-1}\left( \left( H^T+\frac{1}{H} \right)\frac{H}{2} + \left( 1+\frac{1}{H} \right) H^T \left( \frac{H}{2}-1 \right) \right)  \\
&=&\frac{1}{H^T-1} \left( H^{T+1} - \frac{H^T}{2} - H^{T-1} + \frac12\right).
\end{eqnarray*}
Note that $1<H-1<\frac{1}{H^T-1} \left( H^{T+1} - \frac{H^T}{2} - H^{T-1} + \frac12\right)<H+\frac12.$ Then 
$$
F(x_0) > H\left(1 - \frac{1}{H^T-1} \left( H^{T+1} - \frac{H^T}{2} - H^{T-1} + \frac12\right) \right) > -H\left(H - \frac12\right).
$$
Therefore, if $0\leq x_0\leq \frac{H}{2},$ then $-H^2+\frac{H}{2}\leq F(x_0) \leq \frac{H}{2}.$  Moreover, if $ F(x_0)<0$ then for some $k_0$ the inequality $0 \leq F^{(k_0)}(x_0) \leq \frac{H}{2}$ is satisfied.

Let $x_0 > \displaystyle{\frac{H}{2}}$, then 
$$f^{(T)}(x_0)=H^T (1-x_0), \quad \zeta_0 =\vartheta x_0 + (\vartheta - 1) H^T (x_0 - 1) > 1, \hbox{ and } F(x_0)<0.$$ 
This means that for some $k_0$ the inequality $0 \leq F^{(k_0)}(x_0) \leq \frac{H}{2}$ is satisfied.  

Summing up the four cases above, we get that when $-H^2+\frac{H}{2}\leq x_0 \leq \frac{H}{2},$ inequalities~(\ref{inl4}) hold. 
If $x_0>\frac{H}{2}$ or $x_0<-H^2+\frac{H}{2},$ then inequalities~(\ref{inl4}) will be satisfied starting from some iterate, $x_k$. 
The lemma is proved. 
\end{proof}

The assertion of Theorem~\ref{t2} follows from Claim~\ref{l1}, and Lemmas \ref{l2}, \ref{l3} and \ref{l4}, .

\subsection{Proof of Theorem \ref{t3}} 

Let us now pass to the study of the global behavior of the $T$-cycles $\left\{\eta_1,\ldots,\,\eta_T\right\}$ of equation~(\ref{ceq}) 
for which the quantity $\mu = f'(\eta_T)\cdot\ldots\cdot f'(\eta_1)$ is negative, assuming that conditions~(\ref{inn}) are satisfied.

\begin{lemma}\label{l5}
Let conditions~(\ref{inn}) be satisfied and $x_0<0.$ Then \\ $F^{(k)}(x_0)\xrightarrow[k\rightarrow\infty]{}-\infty.$  
\end{lemma}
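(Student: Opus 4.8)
The plan is to exploit the fact that for a negative initial value every forward iterate of the bare map $f$ stays on the linear branch $f(x)=Hx$, so that the full control map $F$ reduces to multiplication by a single constant factor that turns out to exceed $1$ in absolute value.

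First I would record the effect of $f$ on a negative point. If $x_0<0$, then $f(x_0)=Hx_0<0$, and inductively $f^{(j)}(x_0)=H^j x_0<0$ for every $j$; in particular $f^{(T)}(x_0)=H^T x_0$. Substituting this into the intermediary combination $\zeta$ gives
$$ \zeta_0 = \vartheta x_0 + (1-\vartheta)H^T x_0 = \bigl(H^T-\vartheta(H^T-1)\bigr)x_0, $$
which is exactly the coefficient that already appeared in Lemmas \ref{l3} and \ref{l4}. Write $\beta = H^T-\vartheta(H^T-1)$ for this factor, so that $\zeta_0=\beta x_0$.

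The key step is to pin down the sign and size of $\beta$. Conditions~(\ref{inn}) force $\vartheta<\frac{H^T+\frac1H}{H^T+1}<1$, hence $\vartheta(H^T-1)<H^T-1$ and therefore $\beta>1$. Since $x_0<0$, this already gives $\zeta_0=\beta x_0<x_0<0$, so $\zeta_0$ lies again on the linear branch and $F(x_0)=f(\zeta_0)=H\zeta_0=(H\beta)x_0<0$, with multiplier $H\beta>H\ge 2$. Because $F(x_0)$ is itself negative, the very same computation applies with $x_0$ replaced by $F(x_0)$, and by induction $F^{(k)}(x_0)=(H\beta)^k x_0$ for all $k$. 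As $H\beta>1$ and $x_0<0$, this forces $F^{(k)}(x_0)\xrightarrow[k\rightarrow\infty]{}-\infty$, which is the assertion.

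I expect the only real obstacle to be the sign bookkeeping rather than any genuine estimate: one must verify $\beta>1$ so that negativity is preserved under iteration and the branch $f(x)=Hx$ is legitimately used at every step. This is precisely where the upper bound $\vartheta<1$ coming from~(\ref{inn}) is essential, in contrast to the positive-multiplier regime of Lemma \ref{l3}, where $\vartheta>1$ makes the analogous factor $H\beta$ smaller than $1$ and drives the iterates to $0$ instead. Once $\beta>1$ is secured, the induction closes immediately and no case analysis on $x_0$ is needed.
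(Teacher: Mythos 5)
Your proof is correct and follows essentially the same route as the paper: both reduce to the linear branch $f(x)=Hx$ for negative iterates, identify the coefficient $\beta=H^T-\vartheta(H^T-1)=\vartheta+(1-\vartheta)H^T$, and use $\vartheta<1$ (from~(\ref{inn})) to conclude $\beta>1$ so that iterates decrease geometrically to $-\infty$. The only cosmetic difference is that you keep the exact identity $F^{(k)}(x_0)=(H\beta)^k x_0$, whereas the paper settles for the inequality $F^{(k)}(x_0)<H^k x_0$; both close the induction identically.
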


\begin{proof}
Since $x_0<0,$ then $f(x_0)=Hx_0\leq 0,$ \ldots, $f^{(T)}(x_0)=H^T x_0,$ and 
$$
\zeta_0=\vartheta x_0 + (1-\vartheta)f^{(T)}(x_0)=\left(\vartheta + (1 - \vartheta) H^T \right)x_0.
$$
Since $\frac12<\vartheta<1,$ then $\zeta_0<x_0,$ $F(x_0)=H\zeta_0<H x_0,$ $F^{(k)}(x_0)<H^k x_0$ when $k=1,\,2,\ldots,$ 
from which the conclusion of the lemma follows.
\end{proof}

\begin{lemma}\label{l6}
Let the inequalities $\frac{H^T-\frac{1}{H}}{H^T+1}<\vartheta \leq \frac{H^T}{H^T+1}$ be satisfied, and $0\leq x_0 \leq \frac{H}{2}.$ 
Then $0\leq F(x_0)\leq \frac{H}{2}.$  
\end{lemma}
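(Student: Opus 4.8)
The plan is to reduce the entire statement to a single clean inequality on the control parameter. Since $f(\zeta)\ge 0$ precisely when $\zeta\in[0,1]$, and on that range $f(\zeta)\le\frac H2$ holds automatically, the conclusion $0\le F(x_0)\le\frac H2$ is \emph{equivalent} to showing that $\zeta_0=\zeta(x_0)=\vartheta x_0+(1-\vartheta)f^{(T)}(x_0)$ lies in $[0,1]$ for every $x_0\in[0,\frac H2]$. So the whole lemma becomes the assertion $\zeta\big([0,\frac H2]\big)\subseteq[0,1]$, and I would prove this by controlling the extreme values of $\zeta$.

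First I would record the shape of $\zeta$. The iterate $f^{(T)}$ is continuous and piecewise linear on $[0,\frac H2]$ with every branch of slope $\pm H^{T}$, so $\zeta$ is piecewise linear with branch slopes $\vartheta+(1-\vartheta)H^{T}>0$ on the increasing branches of $f^{(T)}$ (using $\frac12<\vartheta<1$ from~(\ref{inn})) and $\vartheta-(1-\vartheta)H^{T}$ on the decreasing branches. The hypothesis $\vartheta\le\frac{H^{T}}{H^{T}+1}$ is exactly what forces this second slope to be $\le 0$. Consequently $\zeta$ increases where $f^{(T)}$ increases and is non-increasing where $f^{(T)}$ decreases, so its extrema on $[0,\frac H2]$ can occur only at the endpoints, at the peaks of $f^{(T)}$ (where $f^{(T)}=\frac H2$), or at the valleys of $f^{(T)}$. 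This dichotomy is what reduces the infinitely many linear pieces to a handful of candidate points.

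For the upper bound I would note that at any peak $x^{\ast}$ one has $\zeta(x^{\ast})=\vartheta x^{\ast}+(1-\vartheta)\frac H2$, which is largest at the rightmost peak $x^{\ast}=1-\frac{1}{2H^{T-1}}$ (the left endpoint of the last decreasing branch, which descends with slope $-H^{T}$ to $f^{(T)}(1)=0$). Comparing with $\zeta(0)=0$ and $\zeta(1)=\vartheta<1$, the binding case is this rightmost peak, and $\zeta(x^{\ast})\le 1$ rearranges precisely to $\vartheta\ge\frac{(H-2)H^{T-1}}{(H-2)H^{T-1}+1}$, i.e. $\frac{\vartheta}{1-\vartheta}\ge(H-2)H^{T-1}$. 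For the lower bound I would locate the deepest valleys: since $f(y)=H(1-y)$ is most negative when $y$ equals the peak height $\frac H2$, the minimum of $f^{(T)}$ is $H(1-\frac H2)$ on $[0,1]$ and $H^{T}(1-\frac H2)$ at the right endpoint $x_0=\frac H2$, and every valley $(x_v,v_v)$ with $v_v<0$ satisfies $\frac{|v_v|}{x_v}\le(H-2)H^{T-1}$, the maximal ratio being realized at the leftmost deepest valley $x=\frac{1}{2H^{T-2}}$ (for $T\ge2$) and at $x_0=\frac H2$. Hence $\zeta(x_v)=\vartheta x_v+(1-\vartheta)v_v\ge 0$ reduces to the \emph{same} inequality $\frac{\vartheta}{1-\vartheta}\ge(H-2)H^{T-1}$.

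Finally I would check that this inequality follows from the hypothesis. Because $\vartheta/(1-\vartheta)$ is increasing in $\vartheta$, the worst case is $\vartheta=\frac{H^{T}-1/H}{H^{T}+1}$, for which $\frac{\vartheta}{1-\vartheta}=\frac{H^{T+1}-1}{H+1}$; the required bound $\frac{H^{T+1}-1}{H+1}\ge(H-2)H^{T-1}$ clears denominators to $H^{T}+2H^{T-1}\ge 1$, which is trivially true. I expect the main obstacle to be the second step: justifying rigorously that the slope-sign dichotomy confines all maxima of $\zeta$ to the peaks and all minima to the valleys, so that the explicit critical locations $1-\frac{1}{2H^{T-1}}$ and $\frac{1}{2H^{T-2}}$ can be pinned down, and verifying that the deepest valleys carry the largest ratio $|v_v|/x_v$. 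The pleasant feature of this route is that both the upper and the lower bound collapse onto the single inequality $\frac{\vartheta}{1-\vartheta}\ge(H-2)H^{T-1}$.
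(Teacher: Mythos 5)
Your proposal is correct, but it takes a genuinely different route from the paper's proof. The paper argues trajectory by trajectory: it splits into the cases $x_0\in[1,\frac H2]$ and $x_0\in[0,1)$, introduces the first index $p$ at which $f^{(p)}(x_0)>1$, and uses the sandwich inequalities (\ref{mis9}), $\frac{1}{H^p}f^{(p)}(x_0)\le x_0\le 1-\frac{1}{H^p}f^{(p)}(x_0)$, to verify $0<\zeta_0<1$ separately in the sub-cases $p=T$ and $p<T$. You instead analyze $\zeta$ globally as a piecewise-linear function: the hypothesis $\vartheta\le\frac{H^T}{H^T+1}$ is exactly the condition making $\zeta$ non-increasing on the decreasing branches of $f^{(T)}$, so the extrema of $\zeta$ on $[0,\frac H2]$ occur only at peaks of $f^{(T)}$ (value $\frac H2$, located where $f^{(T-1)}=\frac12$), at valleys (value $H^{T-k-1}(1-\frac H2)$, located where $f^{(k)}=\frac12$ with $k\le T-2$), or at the endpoints; evaluating $\zeta$ there collapses both bounds to the single inequality $\frac{\vartheta}{1-\vartheta}\ge(H-2)H^{T-1}$, which the hypothesis implies because $\frac{H^{T+1}-1}{H+1}\ge(H-2)H^{T-1}$ reduces to $H^T+2H^{T-1}\ge1$. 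All of your extremal computations check out; the step you flag as the main obstacle is routine for a continuous piecewise-linear map, since between consecutive breakpoints $\zeta$ is monotone. One terminological slip: the valley at $\frac{1}{2H^{T-2}}$ is the \emph{leftmost} but \emph{shallowest} interior valley (the deepest is at $x=\frac12$); however, the leftmost valley of each depth, the valley at $\frac12$, and the endpoint $\frac H2$ all realize the same ratio $(H-2)H^{T-1}$, so your claim stands. What your route buys: a sharp threshold — invariance of $[0,\frac H2]$ actually holds for all $\vartheta\in\left[\frac{(H-2)H^{T-1}}{(H-2)H^{T-1}+1},\,\frac{H^T}{H^T+1}\right]$, a strictly larger interval than the one hypothesized — and it meshes naturally with the graphical analysis of $\zeta$ that the paper itself carries out in Section \ref{sec:GraphicalStudy}. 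What the paper's route buys: it never needs to enumerate the peak/valley structure of $f^{(T)}$, only to follow a single orbit, an argument style that adapts more readily to maps that are not exactly piecewise linear.
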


\begin{proof}
Let $1\leq x_0 \leq \frac{H}{2},$ then $f(x_0)=H(1-x_0)\leq 0,$ \ldots, $f^{(T)}(x_0)=H^T (1-x_0),$
$$
\zeta_0=\vartheta x_0 + (1-\vartheta)f^{(T)}(x_0)=\frac{1}{H^T + 1} \left ( (H^T+\alpha)x_0 + (1-\alpha)H^T(1-x_0)\right),
$$
where $\vartheta=\frac{H^T+\alpha}{H^T+1},$ $-\frac{1}{H} < \alpha \leq 0.$ Since $0\leq x_0 \leq \frac{H}{2}$,
\begin{eqnarray*}
\zeta_0 &=& \frac{1}{H^T+1} \left( H^T + \alpha(x_0 + x_0 H^T - H^T)\right) \\
&>& \frac{1}{H^T+1} \left( H^T - \frac{1}{H} \left(\frac{H}{2} + \frac{H}{2} H^T - H^T\right)\right) \\
&=& \frac{1}{H^T+1} \left( \frac12 H^T - \frac12 + H^{T-1} \right) \\ 
&>& \frac12.
\end{eqnarray*}
On the other side, $\zeta_0 \leq \frac{H^T}{H^T+1} < 1.$ Hence, $0 < F(x_0)\leq \frac{H}{2}.$

Let $0 \leq x_0 < 1,$ $p\in \{1,\ldots,\,T\}$ is the smallest number at which $f^{(p-1)}(x_0) < 1,$ but $f^{(p)}(x_0)>1$
(here we mean that $f^{(0)}(x):=x$). It is clear that $f^{(p)}(x_0) \leq \frac{H}{2}.$

Note that
\leqnomode
\begin{align}\label{mis9}
\frac{1}{H} f^{(p)}(x_0) &\leq f^{(p-1)}(x_0) \leq 1 - \frac{1}{H} f^{(p)}, \nonumber \\
\frac{1}{H^2} f^{(p)}(x_0) &\leq \frac{1}{H} f^{(p-1)}(x_0) \leq f^{(p-2)}(x_0) \nonumber \\
&\leq  1-\frac{1}{H} f^{(p-1)}(x_0) \leq 
1 - \frac{1}{H^2} f^{(p)}(x_0), \\
\quad \quad \quad & \quad \quad \vdots\quad \quad \quad , \nonumber \\
\frac{1}{H^p} f^{(p)}(x_0) &\leq x_0 \leq 1 - \frac{1}{H^p} f^{(p)}(x_0) .
\nonumber
\end{align}

If $p=T,$ then $\zeta_0  > 0$ and, since $x_0 - f^{(T)}(x_0) < 0,$ 
$$
\zeta_0=\vartheta x_0 + (1-\vartheta)f^{(T)}(x_0) < 
\frac{1}{H^T + 1} \left ( \left(H^T - \frac{1}{H}\right)x_0 + \left(1 + \frac{1}{H}\right) f^{(T)}(x_0) \right),
$$
and because of (\ref{mis9}), 
\begin{align*}
\left(H^T - \frac{1}{H}\right)x_0 &+ \left(1 + \frac{1}{H}\right) f^{(T)}(x_0) \\
&\leq \left(H^T - \frac{1}{H}\right)
\left(1 - H^{-T} f^{(T)}(x_0)\right) + 
\left(1 + \frac{1}{H}\right) f^{(T)}(x_0) \\
&= H^T - H^{-1} -f^{(T)}(x_0) + H^{-T-1}f^{(T)}(x_0) + f^{(T)}(x_0) +
H^{-1}f^{(T)}(x_0) \\
&< H^T + \frac12 H^{-T} +\frac12 \\
&< H^T +1.
\end{align*}
This implies $\zeta_0 < 1.$

Let $p<T.$ Then $\zeta_0 < 1.$ Check the inequality $\zeta_0 > 0.$ Since 
$f^{(T)}(x_0) = H^{T-p} \left(1 - f^{(p)}(x_0)\right) \leq 0,$ then $x_0 - f^{(T)}(x_0) > 0$ and
 $$
\zeta_0=\vartheta x_0 + (1-\vartheta)f^{(T)}(x_0) > 
\frac{1}{H^T + 1} \left ( \left(H^T - \frac{1}{H}\right)x_0 + \left(1 + \frac{1}{H}\right) f^{(T)}(x_0) \right).
$$
Because of (\ref{mis9}), 
\begin{eqnarray*}
\left(H^T - \frac{1}{H}\right)x_0 &+& \left(1 + \frac{1}{H}\right) f^{(T)}(x_0)  \\
&\geq& 
\left(H^T - \frac{1}{H}\right) H^{-p} f^{(p)}(x_0) +
    \left(1 + \frac{1}{H}\right) H^{T-p} \left(1- f^{(p)}(x_0)\right) \\
&=& H^{T-p} f^{(p)}(x_0) - H^{T-p-1} f^{(p)}(x_0)
+ \left(1 + \frac{1}{H}\right) H^{T-p} \\
&&  - \left(H^{T-p}+H^{T-p-1}\right)f^{(p)}(x_0)\\
&=&\left(1 + \frac{1}{H}\right) H^{T-p}  
- 2H^{T-p-1}f^{(p)}(x_0) \\
&\geq& \left(1 + \frac{1}{H}\right) H^{T-p} - H^{T-p} \\
&>& 0.
\end{eqnarray*}
Thus, when $0 \leq x_0 < 1,$ the inequalities $0<\zeta_0 <1$ hold and, therefore, $0 < F(x_0) \leq \frac{H}{2}.$ 
The lemma is proved. 
\end{proof}

 Theorem~\ref{t3} follows from  Lemmas~\ref{l5} and \ref{l6}.

Note that when the inequalities $\displaystyle{ \frac{H^T}{H^T+1}<\vartheta<\frac{H^T+\frac{1}{H}}{H^T+1}}$ are satisfied,
the invariant set of equation~(\ref{ceq}) will no longer be a segment, but will be the union of a finite or countable 
number of intervals, and the measure of this set may be small.

Theorems \ref{t2} and \ref{t3} are illustrated in Section \ref{sec:GraphicalStudy} for the case $H = 3$ and $T = 2$.
\section{Computational particularities of using the control system} \label{sec:ComputationalParticularities}

 In this section we introduce the residuals: 
 $$U_n = ||f(\vartheta x_n + (1 - \vartheta) f^{(T)}(x_n)) - f(x_n)||$$ and $$\hat{U}_n = || x_{n+T} - x_n||,$$ whose rate of decay allows us to compare solutions of system (5) to solutions of system (1), as well as the $T$-periodicity of the solutions. 

Let us consider the computational particularities of the iterative scheme~(\ref{ceq}) for finding the cycles of equation~(\ref{eq}).
For cycles with positive multipliers, it is theoretically possible to take any number from the interval 
$\left(\frac{H^T - \frac{1}{H}}{H^T-1},\,\frac{H^T + \frac{1}{H}}{H^T-1}\right)$ as the control parameter;
however, if this parameter belongs to the half-interval $\left(\frac{H^T - \frac{1}{H}}{H^T-1},\,\frac{H^T}{H^T-1}\right],$
then the fixed point will have a fairly large basin of attraction. This will be illustrated geometrically below. 
Therefore, if we want to find true period-$T$ cycles, it is reasonable to choose the control parameter closer to $\displaystyle{\frac{H^T + \frac{1}{H}}{H^T-1}}.$ For cycles with negative 
multipliers, the control parameter can be taken from the interval 
$\displaystyle{\left(\frac{H^T - \frac{1}{H}}{H^T + 1},\,\frac{H^T}{H^T + 1}\right)}.$ Since the basin of attraction of a given cycle can be quite small, the initial value $x_0$ should belong to the nodes of a sufficiently dense grid of the interval $(0,\,1)$ in order to find the largest possible 
number of cycles.

Note that for large values of $T,$ the control parameter $\vartheta$ is close to one, and the value $1-\vartheta$ is close to zero. 
In this case, the lengths of the intervals of possible changes in the control parameter are equal to $\frac{1}{H(H^T-1)}$ or
$\frac{1}{H(H^T+1)},$ i.e. as $T$ grows, they tend to zero exponentially. 

Therefore, although the method suggested above for determining cycles theoretically allows us to solve 
the stated problem numerically, practical questions remain: when can we rely on numerical solutions? How can we control numerical results? 
What calculation accuracy should be chosen?

In practice, intermediate calculations should be introduced to control the results. Let the sequence $\left\{x_n\right\}_{n=1}^{\infty}$ 
be an orbit of the system given by Equation~(\ref{ceq}), and let the quantity $1-\vartheta$ have the order of magnitude $10^{-p},$ where 
$p$ is large enough. 
Then the first checkpoint will be the estimate of the residual 
$U_n = \left\Vert f\left( \vartheta x_n + (1 - \vartheta) f^{(T)}(x_n)\right) - f(x_n)\right\Vert.$ If the sequence $\left\{x_n\right\}$ 
tends to an orbit of system~\eqref{eq}, then the sequence $\left\{U_n\right\}$ tends to zero. However, if the sequence 
$\left\{x_n\right\}$ does not tend to an orbit of  system~\eqref{eq}, then the residual can have the order of magnitude 
${1 - \vartheta}\sim 10^{-p},$ i.e. be close to zero. To be sure that the residual tends to zero, we have to choose the calculation 
accuracy $\delta=10^{-p_1},$ where $p_1$ should be significantly greater than $p.$ Then the first point of control will be the condition
${U_n}\sim 10^{-p_1},$ $n\geq n_1.$

The second checkpoint is the check of periodicity of the numerical solution: 
$\hat{U}_n = \left\Vert x_{n+T} - x_n\right\Vert \sim 10^{-p_1},$ $n\geq n_1.$ Of course, it is also necessary to check that $T$ is a proper cycle of system \eqref{eq},
and not a subcycle, i.e. a cycle of shorter length.

The checkpoints give necessary conditions for the sequence $\left\{x_n\right\}$ to be a $T$-cycle 
of equation~(\ref{eq}). The effectiveness of these necessary conditions is that they are quite simple to check, which we illustrate in the following example.

\textsc{Example.} Consider the problem of finding 5-cycles of equation~(\ref{eq}) when $H=4.$ Choose two initial conditions and for each of them find one 
5-cycle with a positive and negative multiplier respectively. Set $x_0 \in \{0.25,\,0.85\},$ 
$\vartheta \in \left\{\frac{H^T - \frac{0.4}{H}}{H^T+1},\,\frac{H^T + \frac{0.4}{H}}{H^T-1} \right\}.$ Thus, we got four iterative 
schemes. Choose one color for each scheme in order to visualize the results:

\begin{center}
\begin{tabular}{c || c | c| } 
$x_0$/$\vartheta$ &  $\displaystyle{ \frac{H^T - \frac{0.4}{H}}{H^T+1}\approx 0.9989}$ & $\displaystyle{\frac{H^T + \frac{0.4}{H}}{H^T-1}\approx 1.0010}$ \\ \hline \hline
$0.25$ & red & green \\ \hline
$0.85$ & blue & black \\ \hline 
\end{tabular}
\end{center}

Since $(1 - \vartheta)$ is on the order of $10^{-3}$, we choose a  calculation accuracy of $10^{-15}.$ The corresponding cyclic points are shown in Figure~\ref{f1}. Figure~\ref{f5} shows 
the graphs of the residuals $U_n$ and $\hat{U}_n$ as $n$ increases. The periodicity condition, $\hat{U}_ = \Vert x_{n+5} - x_n\Vert < 10^{-15}$ holds for all four schemes, 
starting from $n=43.$

We note that there are 6 distinct cycles of period 5, but we only show four of these here.  To find the remaining two cycles, we would judiciously select two more values of $x_0$ and $\vartheta$.

All cycles of any length can be found in a similar fashion.  The limitation is the calculation accuracy, which should be chosen to be approximately $H^{1.05 T}$, since $H = |f'(x)|$. 
Figure~\ref{f6} (Left Panel) shows the cyclic points of four cycles of length 100. The accuracy of calculations was taken $10^{65}.$ For negative multipliers, 
the control parameter is chosen to be $\vartheta = \displaystyle{\frac{H^T - \frac{0.4}{H}}{H^T+1}} \approx 1 - 1.78 \cdot 10^{-61};$ for positive multipliers, 
$\vartheta = \displaystyle{ \frac{H^T + \frac{0.4}{H}}{H^T-1}} \approx  1 + 0.68 \cdot10^{-62}.$ Note that the lengths of the intervals of possible changes 
in the control parameter have the order of magnitude $H^{-(T+1)} \approx 1.5 \cdot 10^{-61}$. 
The necessary bounds on the residuals are attained at the 250th step (Figure~\ref{f6}, Right Panel).

\begin{figure}[h!]
\centering
\includegraphics[scale=0.28]{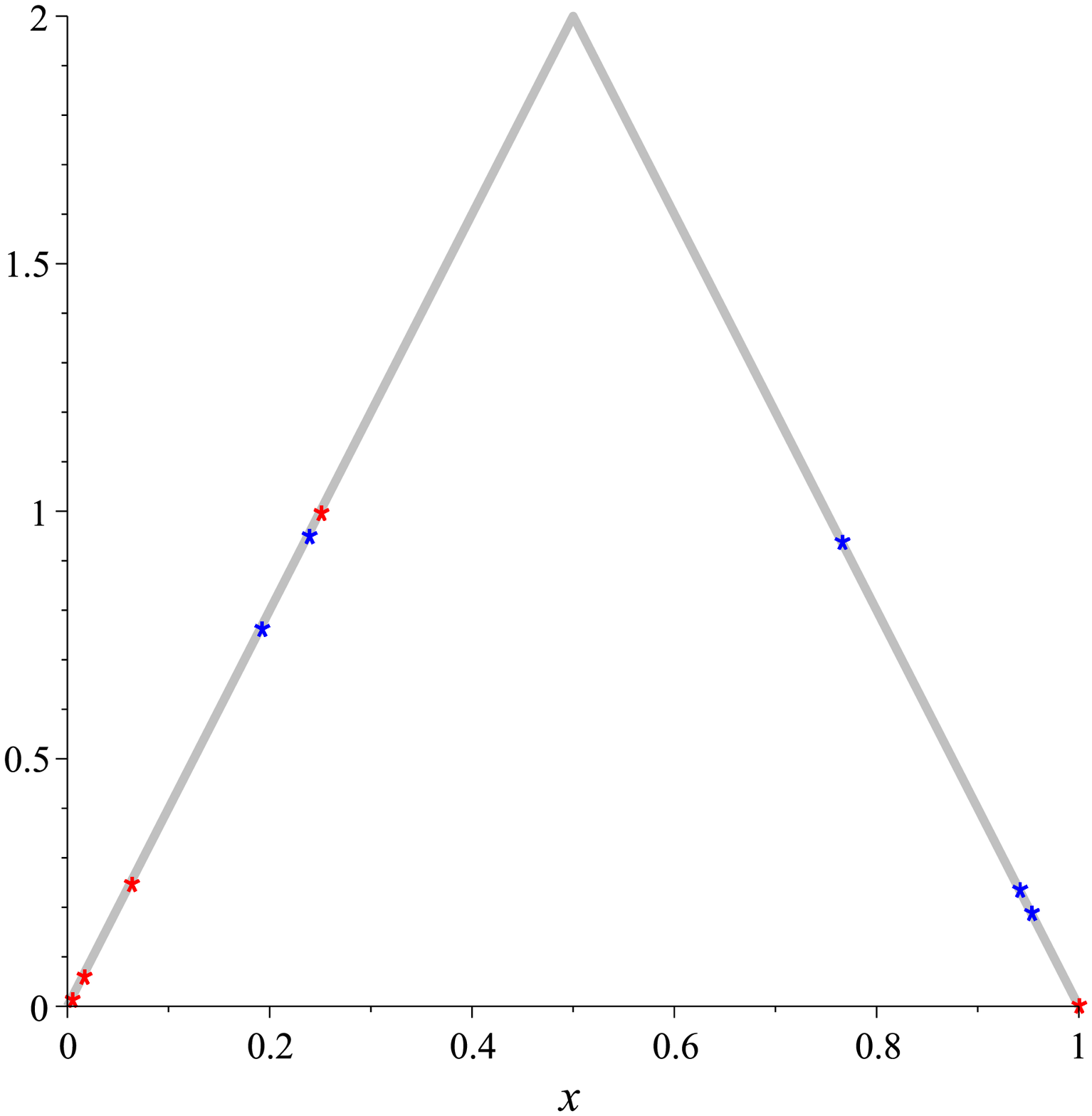}
\hspace{1cm}
\includegraphics[scale=0.28]{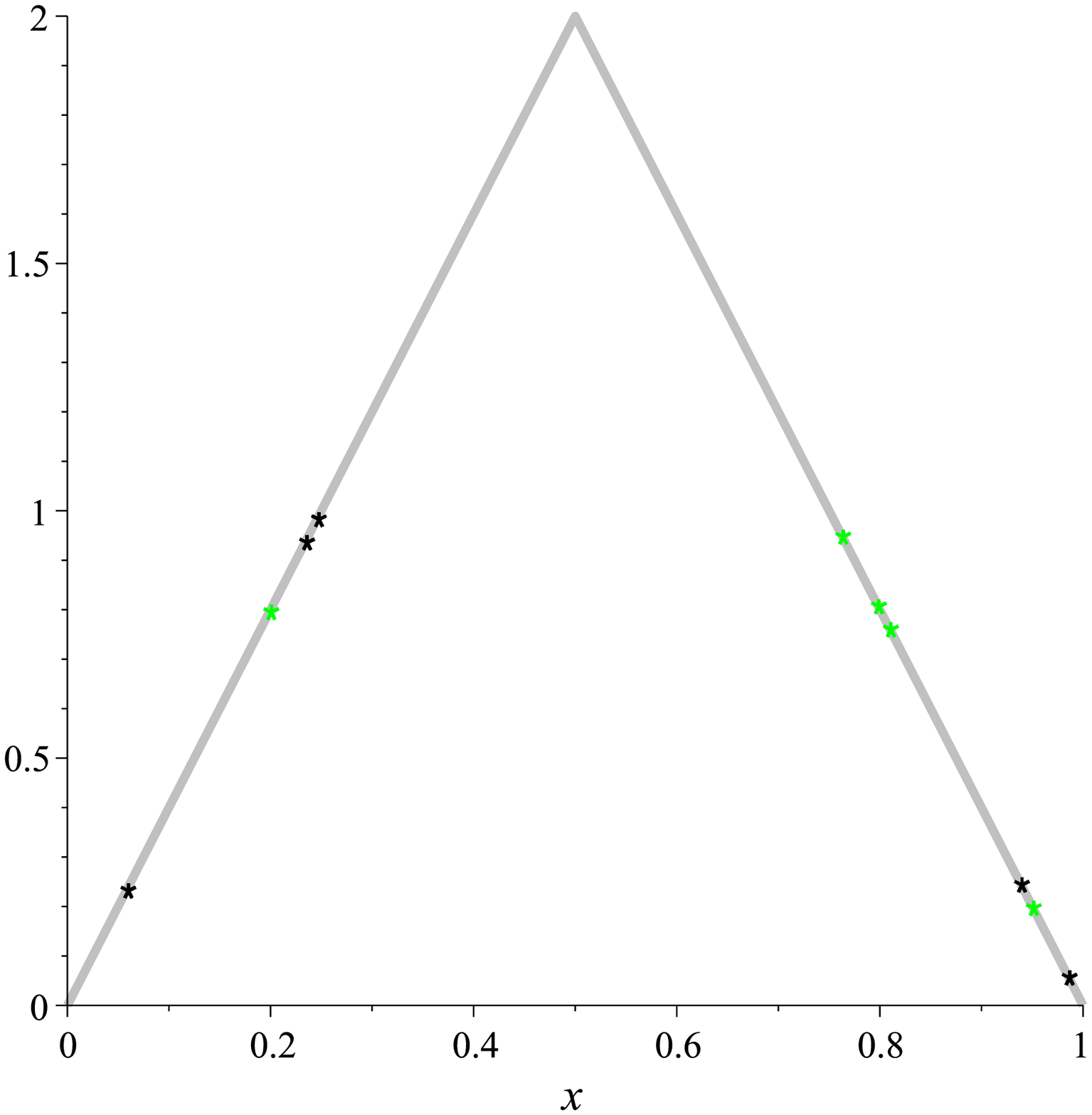}
\hspace{1cm}
\includegraphics[scale=0.28]{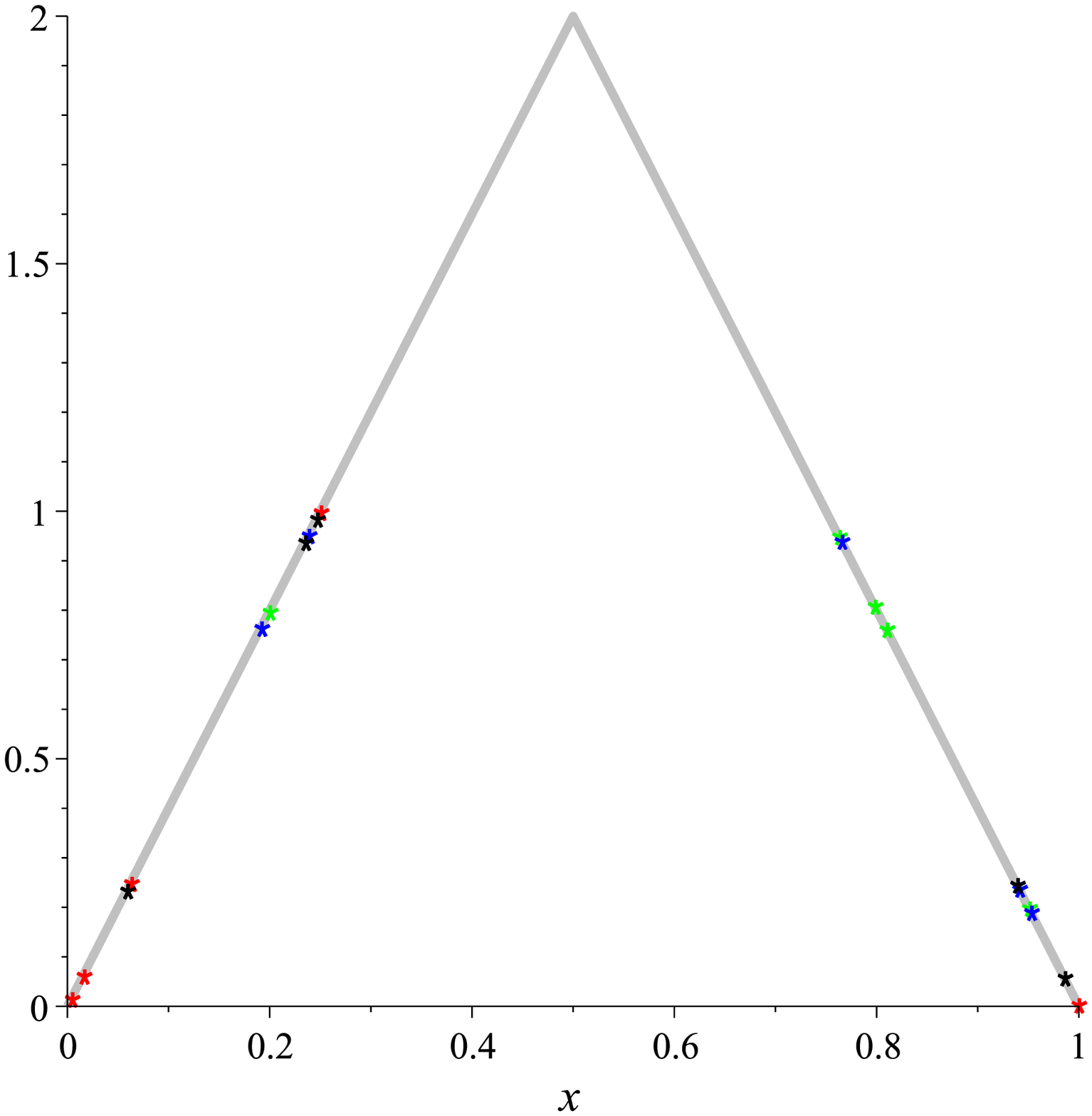}
\caption{Four 5-cycles of the tent map.  The upper left panel shows two cycles with positive multipliers (in red and blue), and the upper right panel shows two cycles with negative multipliers (in green and black). The lower graph shows all four cycles superimposed on each other, showing how the basins of attraction are intertwined.} \label{f1}
\end{figure}

\begin{figure}[h!]
\begin{tikzpicture}
\node[label = \hbox{$U_n, \vartheta = 0.99893$}] (Un) {\includegraphics[scale=0.28]{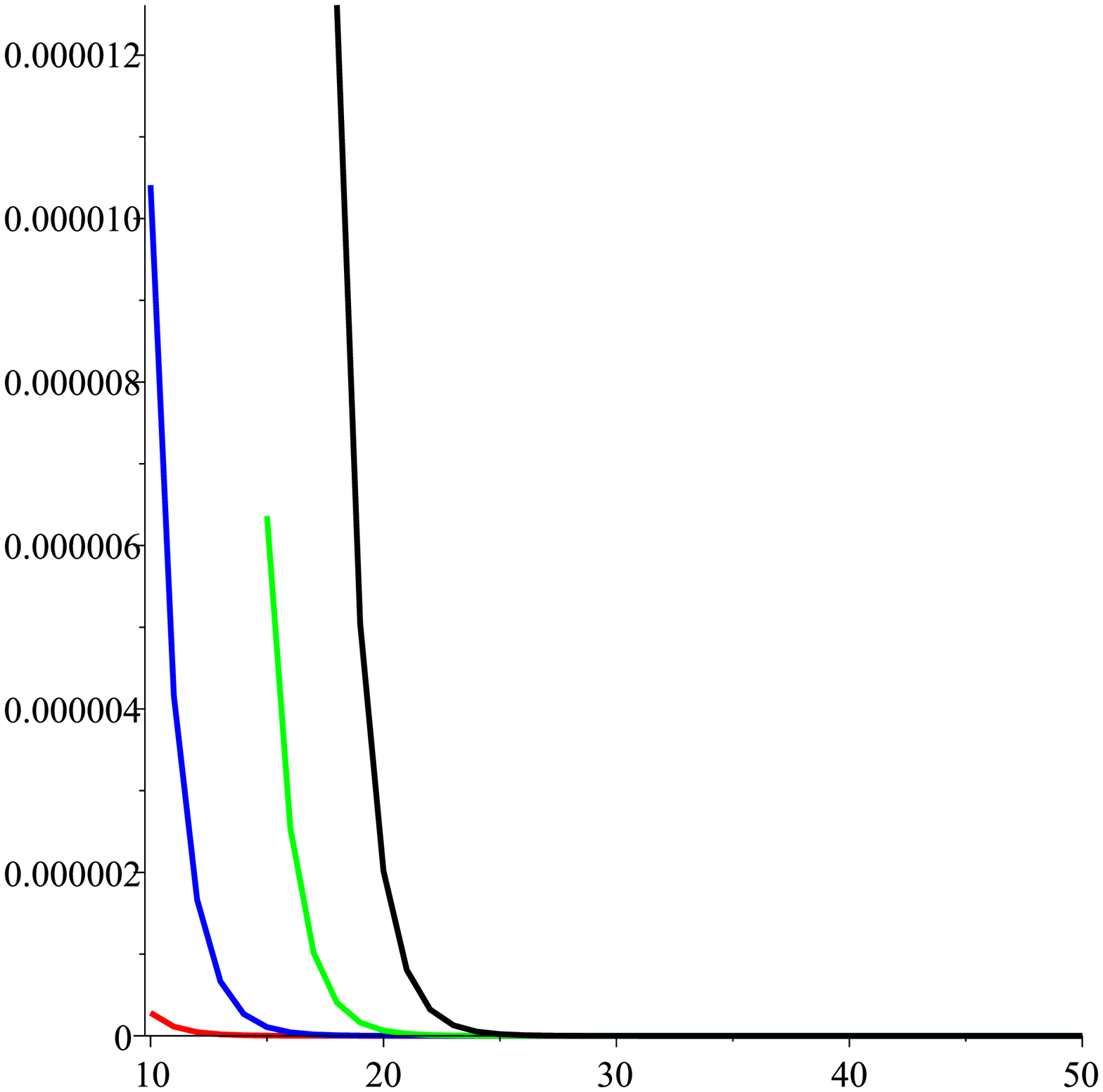}};
\node[right of = Un, label = 
\hbox{${U}_n, \vartheta = 1.0010$}, node distance = 7cm] (hatUn)
{\includegraphics[scale=0.28]{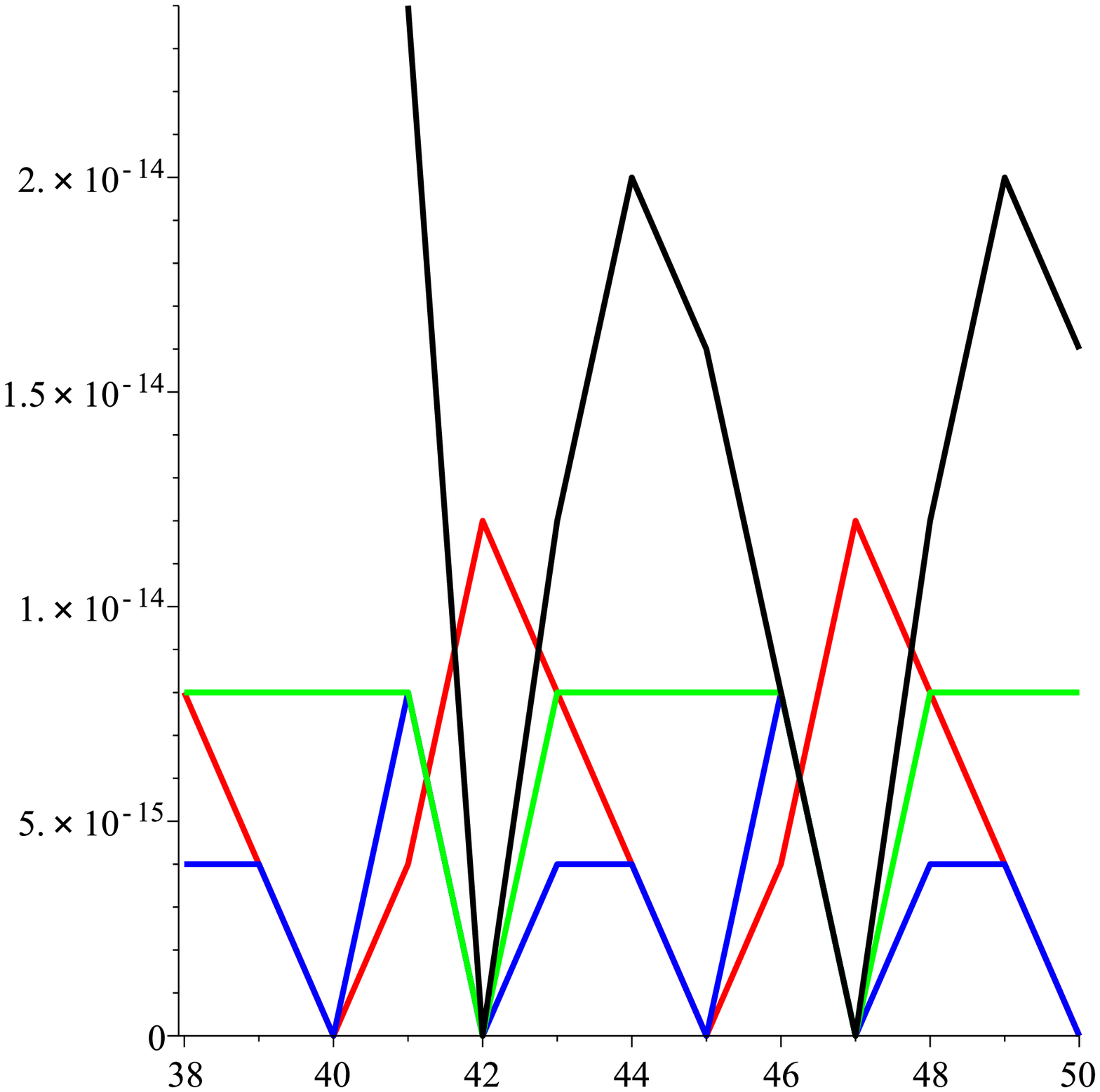}};
\end{tikzpicture}
\caption{Residuals for the numerically estimated period-5 points.  Left Panel: residuals $U_n$ where $n$ increases from 10 to 50, with $\vartheta \approx 0.99893$; Right Panel: the residuals ${U}_n$ with $\vartheta \approx 1.0010$ for $n$ in the range $38, \dots 50$. The control point condition is satisfied for $n \ge 43$.} \label{f5}
\end{figure}

\begin{figure}[h!]
\centering
\includegraphics[scale=0.28]{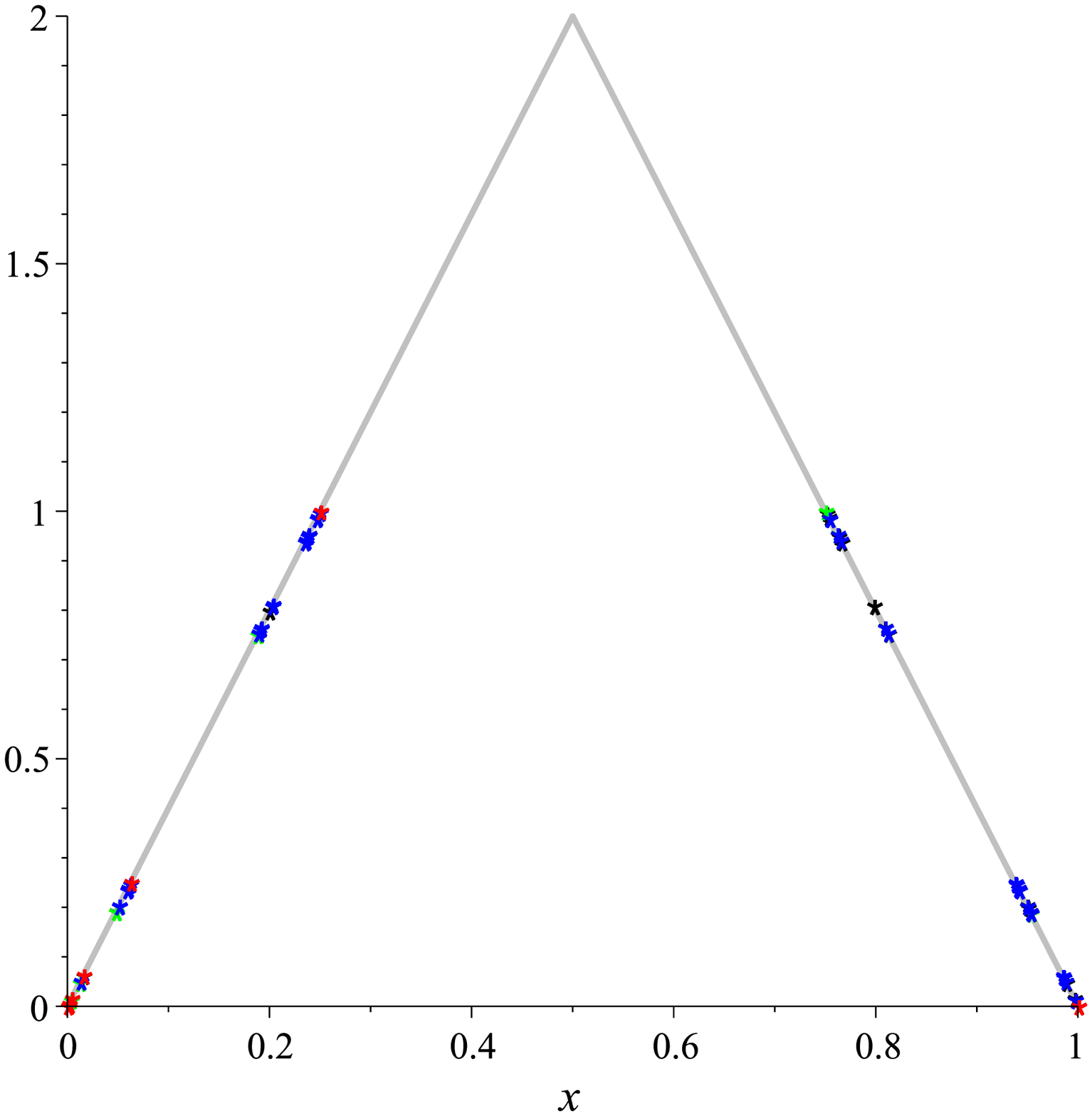}
\includegraphics[scale=0.28]{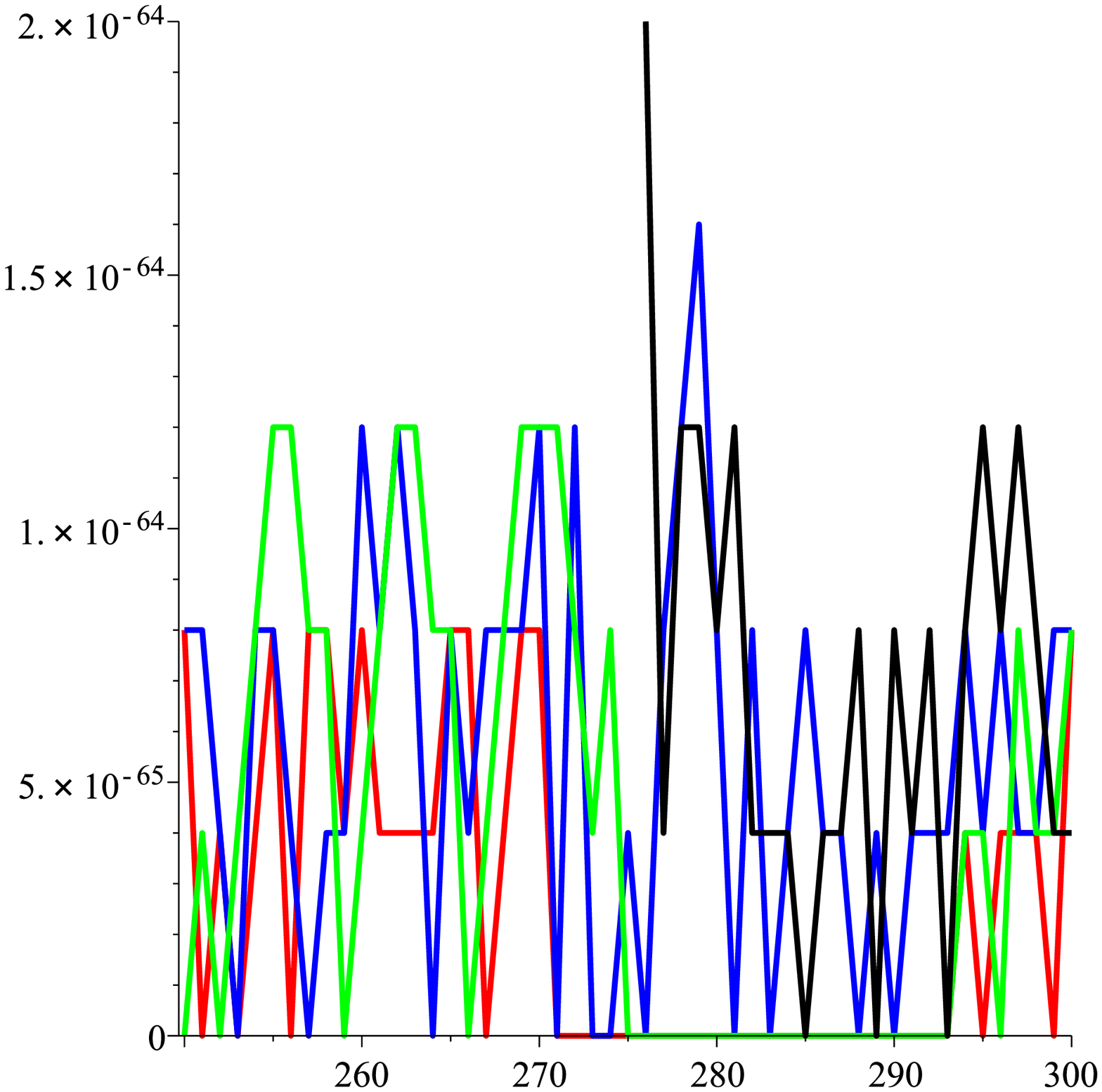}
\caption{Four sets of cyclic points of the tent map 100-cycles.  Left Panel: the four cycles are colored red, blue, green and black.  Notice that many points in the four cycles are tightly clustered.  Right Panel: the residuals, $U_n$, for each of the four cycles as a function of $n$, where $n$ increases from 250 to 300.  The control point conditions are satisfied for $n \ge 250$ in all four cases.  } \label{f6}
\end{figure}

\section{Graphical study of the control system map} \label{sec:GraphicalStudy}

For some choices of the control parameter $\vartheta$ the invariant set is visible on the graph of $y = f \left( \vartheta x + (1 - \vartheta)f(x) \right).$ Several choice of $T$ and $\vartheta$ are illustrated below.

Equation~(\ref{eq}) has two fixed points $\eta = 0$ and $\eta = \frac{H}{H+1}$,  both of which are unstable since
their multipliers are $H$ and $-H$ respectively. Consider Equation~(\ref{ceq})
$$
x_{n+1} = f \left( \vartheta x_n + (1 - \vartheta)f(x_n) \right)
$$
for different values of $\vartheta \in \left[0,\, \frac{H^T + \frac{1}{H}}{H^T-1}\right].$ For definiteness, set $H=3.$
Then $\left[0,\, \frac{H^T + \frac{1}{H}}{H^T-1}\right] = \left[ 0,\,\frac53\right].$ According to Theorem \ref{t2},  to stabilize the fixed point at  $\eta=0,$
we need to choose the control parameter from the interval $\left(\frac43,\,\frac53\right)$.  By Proposition \ref{Prop1}  the multiplier of this fixed point is $\lambda = 9 - 6 \vartheta$, which decreases from 1 to -1 as the control parameter $\vartheta$ increases from $\frac{9}{3}$ to $\frac{5}{3}$, and $\lambda = 0$ when $\vartheta = \frac{3}{2}$.   The graphs of the function 
$F(x) = f \left( \vartheta x + (1 - \vartheta)f(x) \right)$ for different values of $\vartheta$ are shown in Fig.~\ref{f8}.

\begin{figure}[h!]
\centering
\includegraphics[scale=0.28]{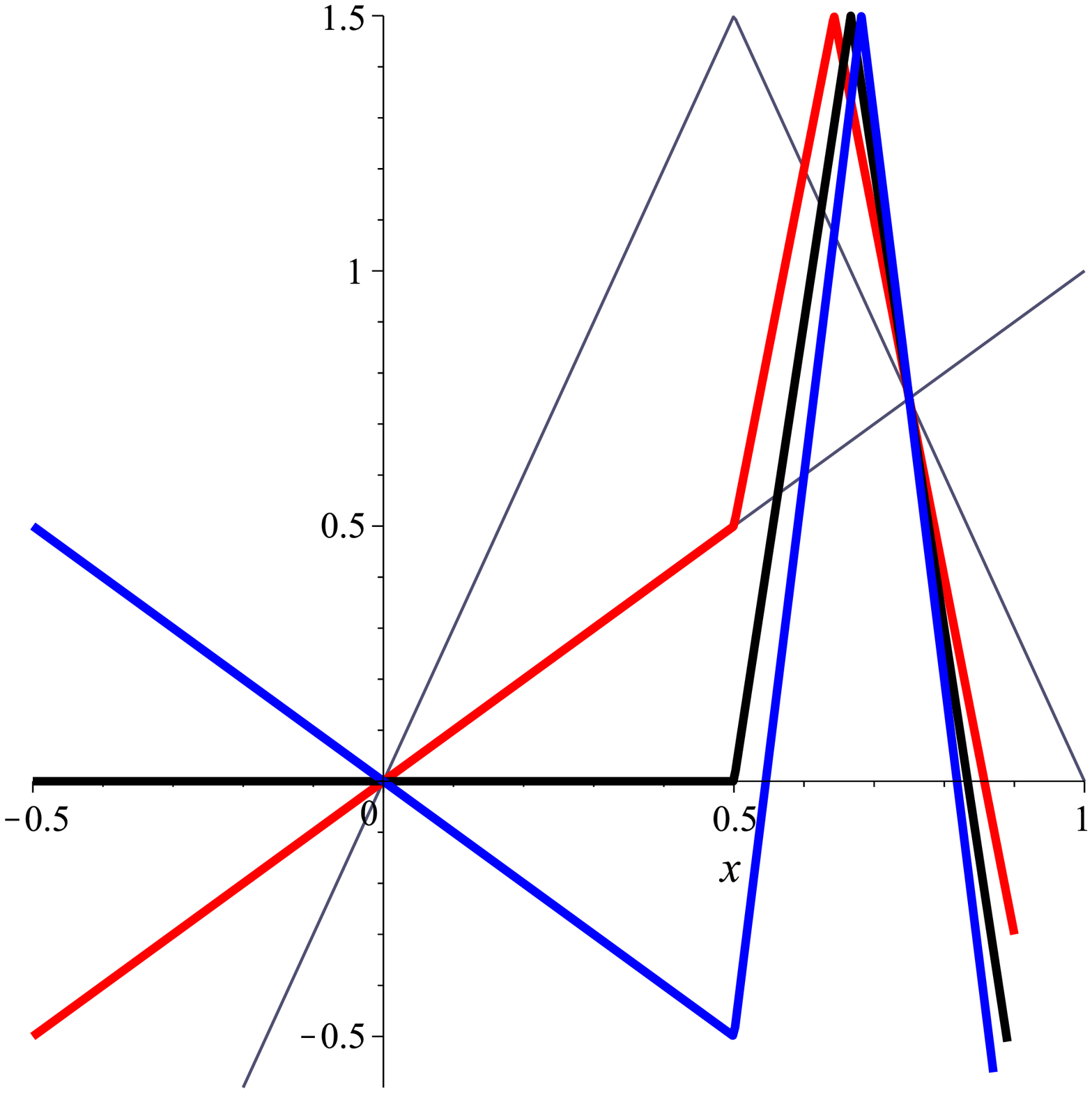}
\caption{Graphs of the function $y = f \left( \vartheta x + (1 - \vartheta)f(x) \right)$ at $\vartheta=\frac43$ (red);
at $\vartheta=\frac32$ (black); at $\vartheta=\frac53$ (blue); graphs of the functions $y=x$ and $y=f(x)$ 
are marked in grey} \label{f8}
\end{figure}

According to Theorem \ref{t3}, the fixed point at  $\eta=\frac34$ will be a locally asymptotically stable fixed point of equation~(\ref{ceq}) if
$\vartheta \in \left(\frac23,\,\frac56\right).$ As the control parameter increases, the multiplier of this equilibrium 
decreases from $1$ to $-1.$ When $\vartheta=\frac34,$ the multiplier equals zero. The graphs of the function 
$F(x) = f \left( \vartheta x + (1 - \vartheta)f(x) \right)$ for different values of $\vartheta$ are depicted in Fig.~\ref{f9}. 

\begin{figure}[h!]
\centering
\includegraphics[scale=0.28]{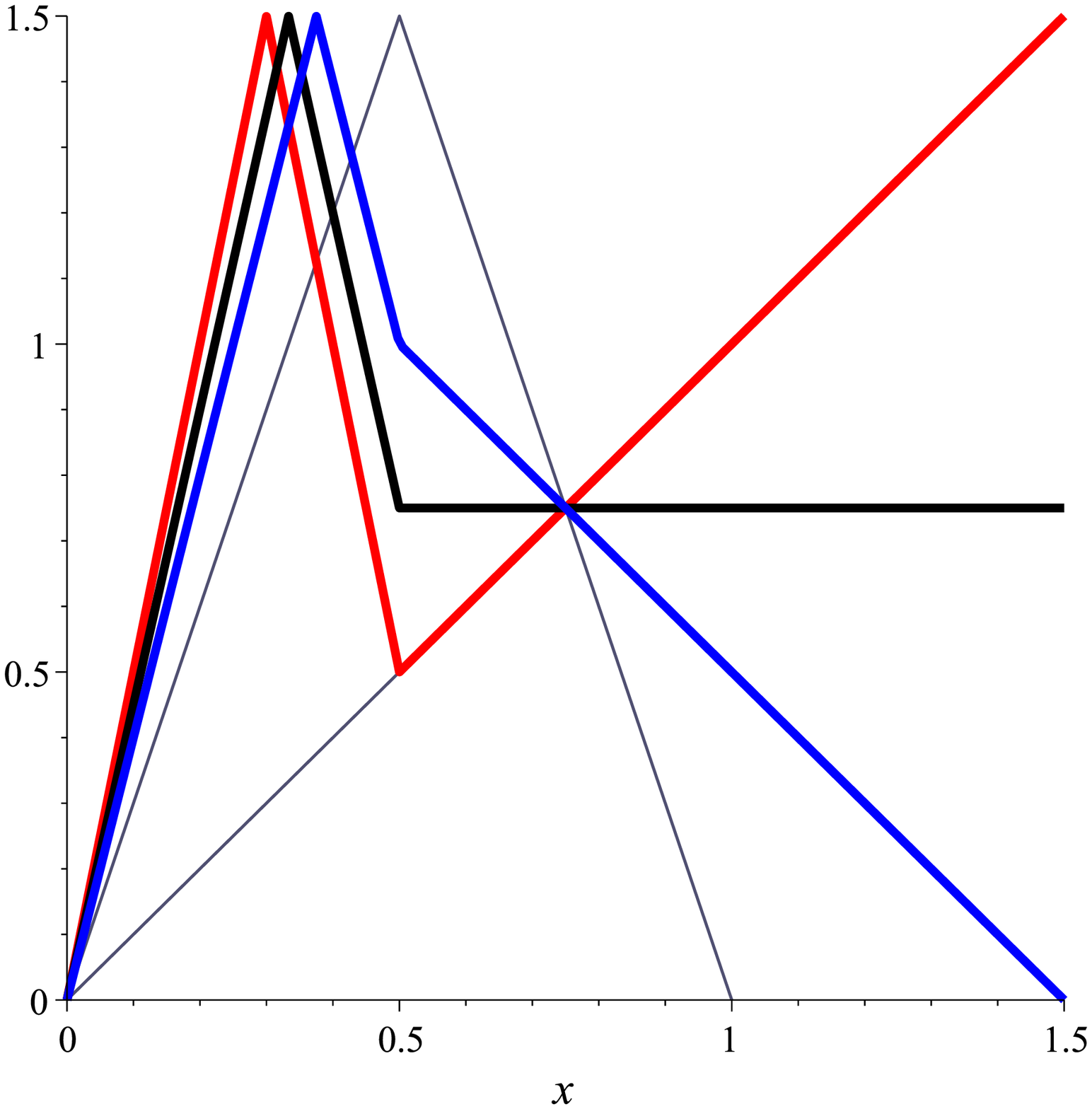}
\caption{Graphs of the function $y = f \left( \vartheta x + (1 - \vartheta)f(x) \right)$ at $\vartheta=\frac23$ (red);
at $\vartheta=\frac34$ (black); at $\vartheta=\frac56$ (blue); graphs of the functions $y=x$ and $y=f(x)$ 
are marked in grey} \label{f9}
\end{figure}

If we represent the Lamerey diagram  on the graphs of Figure~\ref{f8}, then we can observe that for any initial value $x_0$ 
and $\vartheta \in \left(\frac43,\,\frac53\right),$ the corresponding solution of equation~(\ref{ceq}) will tend to zero. 
Similarly, Figure~\ref{f9} shows that the segment $\left[ 0,\,\frac32\right],$ is mapped to itself under
$F(x) = f \left( \vartheta x + (1 - \vartheta)f(x) \right)$,  and for $\vartheta \in \left(\frac23,\,\frac56\right)$) it is mapped strictly {\it into} itself.
Moreover, for any $x_0 \in \left(0,\,\frac32\right),$ the solution tends to the fixed point $\eta = \frac34.$ These facts follow from the proofs of Theorems \ref{t2} and \ref{t3}.

Let $T=2.$ Equation~(\ref{eq}) has only one 2-cycle, and its multiplier is negative. In this case, Equation~(\ref{ceq}) 
takes the form 
$$x_{n+1} = F(x) = f \left( \vartheta x_n + (1 - \vartheta)f^{(2)}(x_n) \right).$$
Consider the graphs of the functions 
$y=F(x)$, and  $y=F^{(2)}(x)$, when
$\vartheta = \displaystyle{\frac{H^T}{H^T-1} = \frac{9}{8}}$ and $\vartheta = \displaystyle{\frac{H^T}{H^T+1} = \frac{9}{10}}$,
shown in Figures~\ref{f10} and \ref{f11}, respectively.

\begin{figure}[h!]
\centering
\includegraphics[scale=0.28]{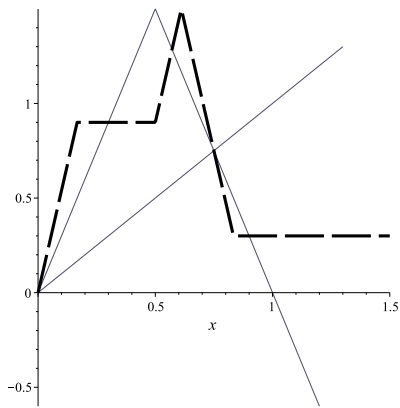}
\caption{Graphs of the function $y = F(x)$ at $\vartheta=\frac{9}{10}$ (black dashed line);
$y=x$ and $y=f(x)$ (grey).
Both fixed points are unstable, while $F'(x) = 0 $ at both points on the period-2 cycle: $\{ \frac{3}{10}, \frac{9}{10} \}$, so the period-2 cycle is super-stable. } \label{f10}
\end{figure}

\begin{figure}[h!]
\begin{minipage}[h]{0.45\linewidth}
\center{\includegraphics[scale=0.28]{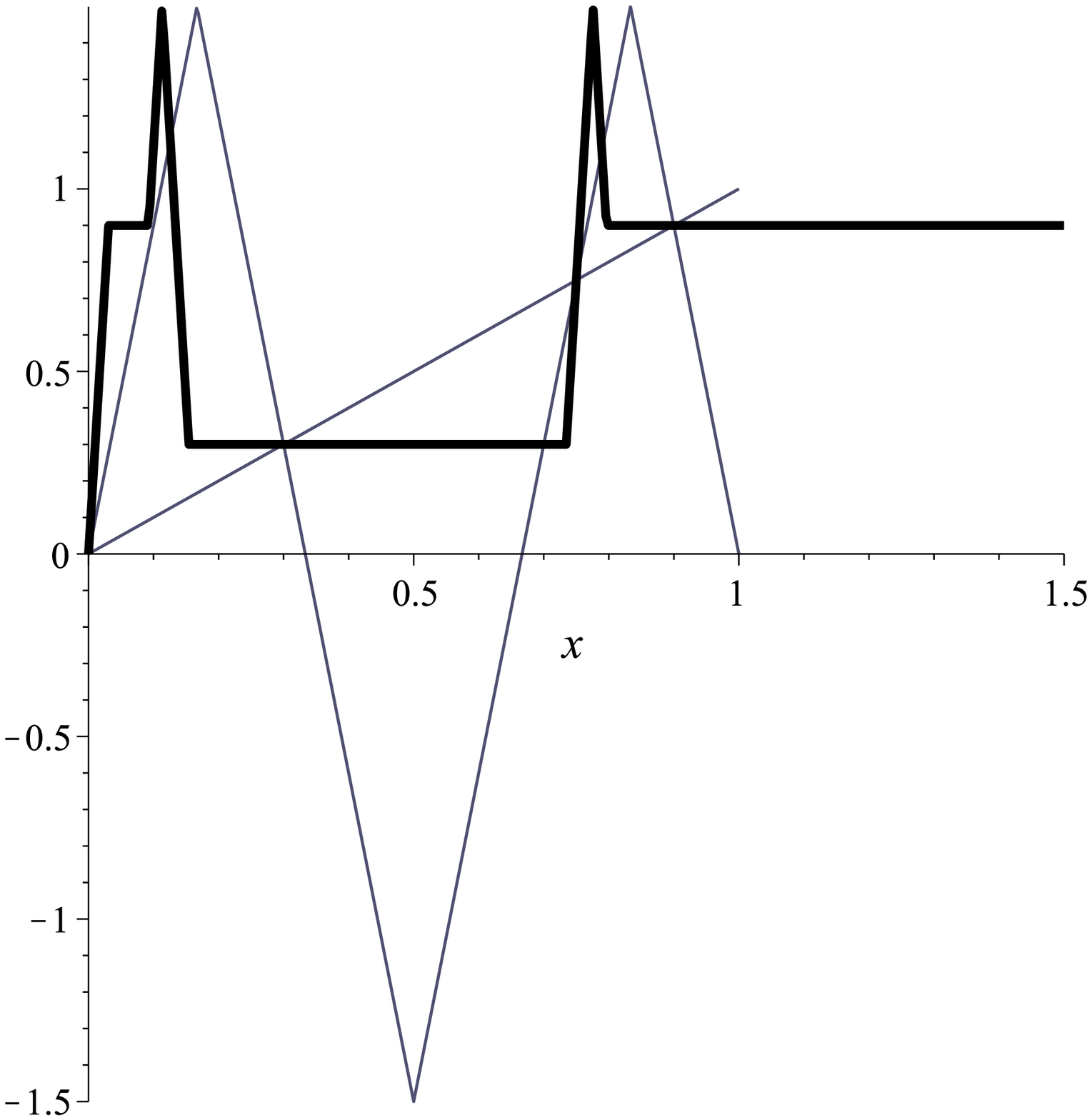} \\ a)}
\end{minipage}
\hspace{1cm}
\begin{minipage}[h]{0.45\linewidth}
\center{\includegraphics[scale=0.28]{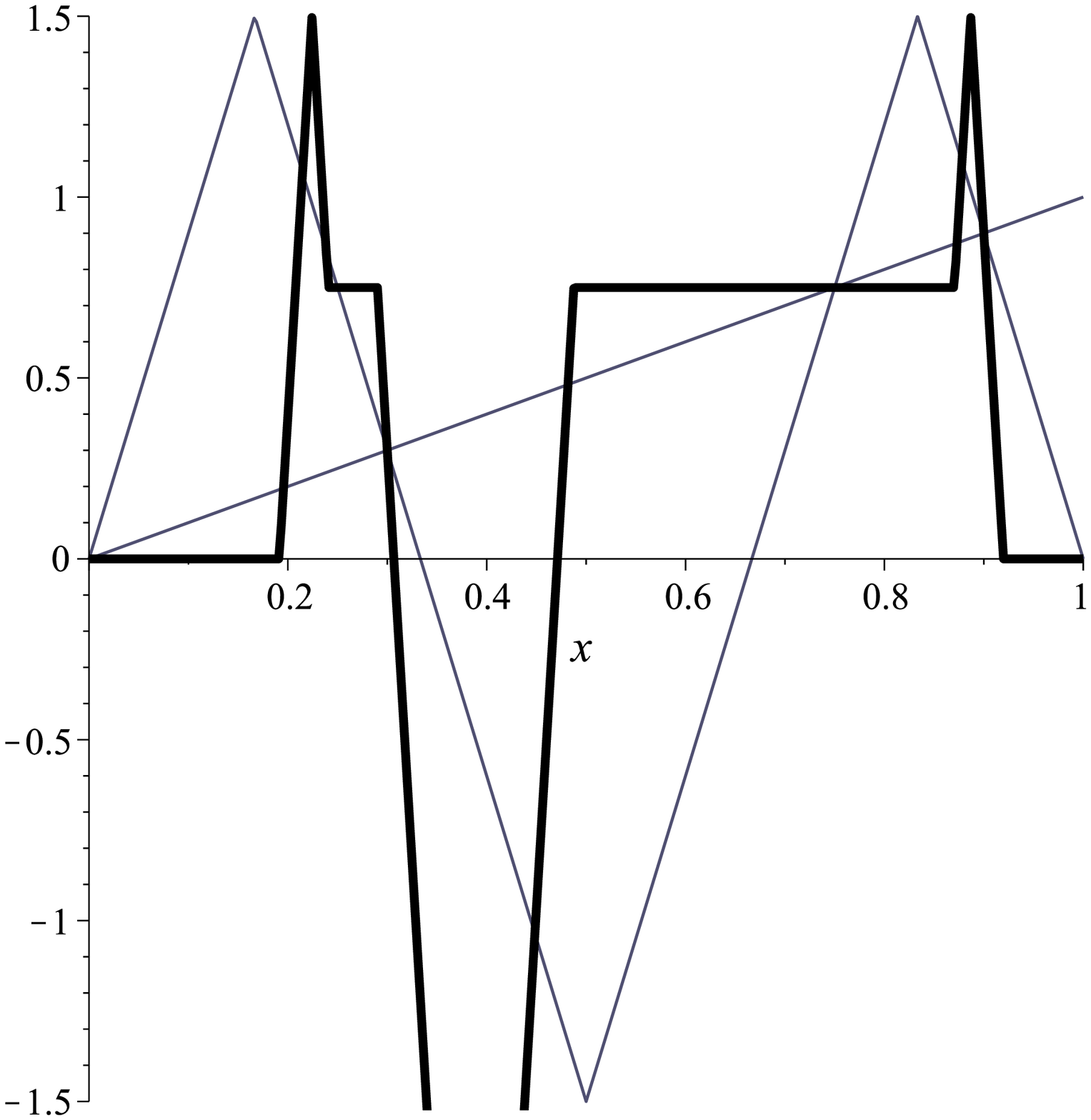} \\ b)}
\end{minipage}
\caption{Graphs of the function $y = F^{(2)}(x)$ are drawn in black.  In the Left Panel a): $\vartheta=\frac{9}{10}$; in the Right Panel b): $\vartheta=\frac{9}{8}$.
Graphs of the functions $y=x$ and $y=f^{(2)}(x)$ are shown in grey.
In a) we see that the derivative of $^{(2)}(x)$ is 0 at the two fixed points of $F^{(2)}$ that are in the proper period-2 cycle of $f$, showing that these points are {\it super-stable} fixed points of $F^{(2)}$.  By contrast, in b) the period-2 points of $f$ are now unstable, while the two fixed points of $f$ are super-stable.}\label{f11}
\end{figure}

From Figure~\ref{f10}, we can see that the set $\left[ 0,\,\frac32\right]$ is invariant under the mapping $y=F(x)$.  In  Figure~\ref{f11}-a), we see that 
 the 2-cycle of Equation~(\ref{ceq}) becomes locally asymptotically stable with the multiplier equal to zero. In
Figure~\ref{f11}-b), we see that the 2-cycle of equation~(\ref{ceq}) is unstable, but both fixed points are locally asymptotically stable,  
with zero multipliers. We give an explanation for this fact in the next few paragraphs.

Consider the global behavior of solutions of equation~(\ref{ceq}) at $\vartheta = \frac{H^T}{H^T+1}.$ The function 
$$
\zeta (x) = \frac{H^T}{H^T+1} x + \frac{1}{H^T+1}	f^{(T)}(x)
$$
does not decrease on $(-\infty,\infty).$ Indeed, 
$$
\zeta'(x) = \frac{H^T \pm H^T}{H^T+1} =
\left\{\begin{array}{ll}
0,\,x\in \Sigma, \\
\frac{2 H^T}{H^T+1},\,x \notin\Sigma,
\end{array}\right.
$$
where $\Sigma$ is the set on which the function $f^{(T)}(x)$ decreases. Note that the sets on which  $f^{(T)}(x)$ 
and $\zeta (x)$ are increasing coincide. In particular, the function $\zeta (x)$ increases when 
$x \in \left( \frac12,\, 1-\frac{1}{H}+\frac{1}{2 H^{T-1}} \right)$ from $-\frac12 H^{T-1} (H-1)$ to $\frac{H}{2}.$ The point 
$\left( \frac12,\, -\frac12 H^{T-1} (H-1) \right)$ is the minimum of the function $f^{(T)}(x)$ on $[0,1]$. Figure~\ref{f12} 
shows the graphs of the functions $f^{(T)}(x)$ and $\zeta (x)$ in the case $H=3,$ $T=3.$

\begin{figure}[h!]
\centering
\includegraphics[scale=0.28]{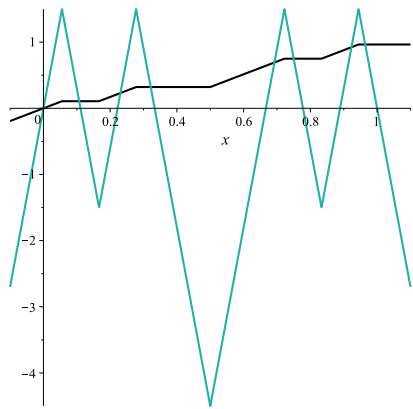}
\caption{Graphs of the functions $f^{(T)}(x)$ (sea green) and $\zeta (x)$ (black) when  $H=3$ and  $T=3$.  
Note that $\zeta(x)$ and $f^{(T)}$ are increasing on the same intervals, and $\zeta(x)$ is constant where $f^{(T)}(x)$ is decreasing.} \label{f12}
\end{figure}

Let $x \in \left( \frac12,\, 1-\frac{1}{H}+\frac{1}{2 H^{T-1}}  \right),$ $T>1.$ Denote $x = 1 - \frac{1}{H} + \frac{\alpha}{H^T}.$
Then $\alpha \in \left( -\frac12 H^{T-1} (H-2),\, \frac{H}{2} \right).$ Compute $f^{(T)}(x)$: the first iterate remains greater than 1/2, the second iterate is less than 1/2, and all subsequent iterates increase to $\alpha$: 
$$f(x) = 1 - \frac{\alpha}{H^{T-1}}>\frac12, \;
f^{(2)}(x) = \frac{\alpha}{H^{T-2}}<\frac12, \ldots , f^{(T)}(x)=\alpha.$$
This implies that the equation 
$\frac{H^T}{H^T+1} \left( 1-\frac{1}{H}+\frac{\alpha}{H^T} \right) + \frac{1}{H^T+1} \alpha = \frac12$ 
has only one root on $\left( -\frac12 H^{T-1} (H-2),\, \frac{H}{2} \right)$, which we denote by  $\widehat{\alpha} = \frac14 - \frac14 H^T + \frac12 H^{T-1}.$
Since $\zeta \left(\frac12\right) = \frac{H^{T-1}}{H^T+1} < \frac12$, and  
$\zeta \left( 1 - \frac{1}{H} + \frac{1}{2 H^{T-1}} \right) = \frac{H^T - H^{T-1} + H}{H^T +1} > \frac12,$ then in the interval 
$x \in \left( \frac12,\, 1-\frac{1}{H}+\frac{1}{2 H^{T-1}}  \right)$ there is exactly one root of the equation $\zeta (x) = \frac12$, namely
$\hat{x} = 1 - \frac{1}{H} + \frac{\widehat{\alpha}}{H^T} = \frac34 - \frac{1}{2H} + \frac{1}{4 H^T}.$ This means that the function 
$F(x)=f(\zeta(x))$ does not decrease on the interval $\left[0,\,\hat{x}\right],$ does not increase on $\left[\hat{x},\,1\right],$
$F(\hat{x})=\frac{H}{2}$ and $F(x) = F \left( 1 - \frac{1}{2 H^{T-1}} \right) = \frac{H^T}{H^T + 1}$ when 
$x \geq 1 - \frac{1}{2 H^{T-1}}.$ The graph of the function $F(x)$ when $H=3$ and $T=4$ is shown in Figure~\ref{f13}. On the same figure we plot the corresponding function,  $\zeta(x)$.

\begin{figure}[h!]
\center{\includegraphics[width = .8\textwidth]{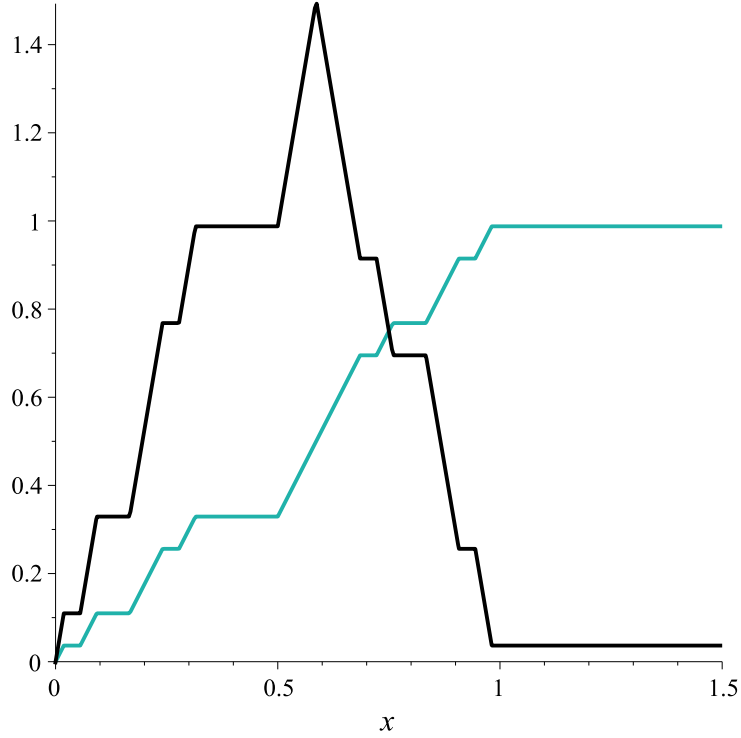} \\ b)}
\caption{Graphs of the functions $F(x)$ (black) and $\zeta (x)$ (blue) when $H=3$ and $T=4$.  Since $F(x) = f(\zeta(x))$, $F$ and $\zeta$ are constant on the same intervals, and $F$ is increasing when $\zeta$ is less than $\frac12$, decreasing when $\zeta$ is greater than $\frac12$. } \label{f13}
\end{figure}

On the interval $[0,\,1],$ there are $2^{T-1}$ disjoint intervals on which the function $F(x)$ is constant. This means that there are $2^{T-1}$ 
periodic points, where $F'(x)=0.$ These points are $T$-periodic points of the map $f(x)$ with negative multipliers. Generally speaking, 
the proper period of these points may be less than $T$ if $T$ is not a prime number. The question of stability of these so-called subcycles 
is considered in the next section. The fact that the derivative is zero means that the sequence defined by iterating Equation~(\ref{ceq}) 
 converges very quickly to these cyclic points, i.e. they are {\it super-stable}.   In particular, if $x_0 \geq \hat{x},$ then $x_1 = F(x_0) = \frac{H^T}{H^T + 1} > \frac12,$
$x_2 = F(x_1) = f(x_1) = \frac{H}{H^T + 1} < \frac12,$ \ldots , $x_T = F(x_{T-1}) = f(x_{T-1}) = \frac{H^{T-1}}{H^T + 1} < \frac12,$
$x_{T+1} = F(x_T) = f(x_T) = \frac{H^T}{H^T + 1} = x_1,$ i.e., $\left\{ x_1,\ldots,\,x_T\right\}$ is a $T$-cycle of both Equations~(\ref{eq}) and 
(\ref{ceq}). 

\section{Stabilization of Subcycles} \label{sec:Subcycles}

The system given by Equation \eqref{eq} has $T$-cycles for any $T \geq 1$.  Moreover, it is not difficult to calculate the number of $T$-cycles of any given period. Since $T$-periodic points are fixed points of $f^T(x)$, they will be points of intersection of the graph of $y = f^T(x)$ and the graph $y = x$.  These two graphs intersect exactly $2^T$ times for $x\in[0,\,1]$. Two points of intersection correspond to 
the fixed points of $f$ at $x=0,$ and $x = 1 - \frac{1}{H+1}$. 
If $T$ is a prime number, then there are exactly $\frac{2^T - 2}{T}$ cycles of length $T$. (By Fermat's Little Theorem, if $T$ is prime, $2^T-2$ is divisible by T, and thus, $\frac{2^{T}-2}{T}$ is an integer.) If $T = \prod\limits_{j=1}^{s} {\tau_{j}^{\rho_j}},$ where $\tau_1,\ldots,\,\tau_s$ are distinct prime numbers, 
then there are exactly 
$$\frac{1}{T} \left[{2^T - \sum\limits_{j=1}^{s}{2^\frac{T}{\tau_j}} + \sum\limits_{\substack{i,\,j=1 \\ i<j}}^{s}{2^\frac{T}{\tau_i \tau_j}} + \ldots +
(-1)^s 2^\frac{T}{\tau_1 \cdot \ldots \cdot \tau_s}}\right]$$
cycles of the length $T.$ The fact that the given fraction is an integer is a special case of 
Gauss's theorem, see for example \cite[p.84]{Dixon}.

Let $\tau$ be a factor of the number $T$, and let the $T$-cycle of the system given by Equation~(\ref{ceq}) 
be locally asymptotically stable. \textit{The task} is: to find out which $\tau$-cycles of the same system will be locally asymptotically stable?  It turns out that the answer depends on the parity of $T/\tau$.

\begin{theorem}\label{t4}
Let condition~(\ref{inn}) be satisfied, i.e. $T$-cycles of equation~(\ref{eq}), for which the multipliers are positive, are locally asymptotically stable cycles 
of equation~(\ref{ceq}). Then all the $\tau$-cycles of equation~(\ref{eq}), for which the multipliers are positive, and all the $\tau$-cycles of 
equation~(\ref{eq}), for which the multipliers are negative and the number $\frac{T}{\tau}$ is even, are locally asymptotically stable cycles of 
equation~(\ref{ceq}).

Let condition~(\ref{inn}) be satisfied, i.e. $T$-cycles of Equation~(\ref{eq}), for which the multipliers are negative, are locally asymptotically stable cycles 
of Equation~(\ref{ceq}). Then all the $\tau$-cycles of Equation~(\ref{eq}), for which the multipliers are negative and the number $\frac{T}{\tau}$ is odd, 
are locally asymptotically stable cycles of Equation~(\ref{ceq}).
\end{theorem}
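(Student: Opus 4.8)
The plan is to reduce the entire statement to one multiplier computation and then read off the parity condition from a single sign. First I would record the structural fact that makes a subcycle a cycle of the control map at all: if $\tau\mid T$ and $\{\eta_1,\dots,\eta_\tau\}$ is a $\tau$-cycle of~\eqref{eq}, then each $\eta_j$ is a fixed point of $f^{(T)}$, so $\zeta(\eta_j)=\vartheta\eta_j+(1-\vartheta)\eta_j=\eta_j$ and hence $F(\eta_j)=f(\eta_j)$. Thus the $\tau$-cycle is a genuine cycle of~\eqref{ceq}, and the only issue is the modulus of its multiplier under $F$. (As with the $T$-cycles, one assumes the cyclic points avoid the corner $x=\tfrac12$, so that all derivatives below are defined.)

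Next I would compute that multiplier directly by the chain rule. Writing $F=f\circ\zeta$ with $\zeta'=\vartheta+(1-\vartheta)(f^{(T)})'$, the key sub-step is that at a $\tau$-periodic point the orbit under $f$ runs around the $\tau$-cycle exactly $T/\tau$ times in $T$ steps, so $(f^{(T)})'(\eta_j)=\mu_\tau^{\,T/\tau}$, where $\mu_\tau=\prod_{j=1}^{\tau}f'(\eta_j)$ is the multiplier of the $\tau$-cycle. This value is independent of $j$, so $\zeta'(\eta_j)=\vartheta+(1-\vartheta)\mu_\tau^{\,T/\tau}$ is a constant factor; multiplying the $\tau$ factors $F'(\eta_j)=f'(\eta_j)\,\zeta'(\eta_j)$ yields
$$
\lambda_\tau \;=\; \mu_\tau\bigl(\vartheta+(1-\vartheta)\mu_\tau^{\,T/\tau}\bigr)^{\tau}.
$$
Local asymptotic stability of the $\tau$-cycle is then exactly $|\lambda_\tau|<1$.

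The remaining work is sign bookkeeping. Since $|\mu_\tau|=H^\tau$, write $\mu_\tau=\varepsilon H^\tau$ with $\varepsilon=\pm1$; then $\mu_\tau^{\,T/\tau}=\varepsilon^{\,T/\tau}H^T$, so the sign of the inner term is governed by the parity of $T/\tau$ precisely when $\varepsilon=-1$. I would also put the two stability windows in sharp form: \eqref{inp} is equivalent to $\bigl|\vartheta+(1-\vartheta)H^T\bigr|<1/H$, and \eqref{inn} is equivalent to $\bigl|\vartheta-(1-\vartheta)H^T\bigr|<1/H$. Matching cases: a positive $\tau$-cycle and a negative $\tau$-cycle with $T/\tau$ even both give inner term $\vartheta+(1-\vartheta)H^T$, controlled by \eqref{inp}; a negative $\tau$-cycle with $T/\tau$ odd gives inner term $\vartheta-(1-\vartheta)H^T$, controlled by \eqref{inn}. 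In each claimed case one bounds $|\lambda_\tau|=H^\tau\,|\text{inner}|^\tau<H^\tau(1/H)^\tau=1$, which is the asserted stability.

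The step I expect to be the real crux is the sign/parity accounting in the last paragraph: deciding, for each pairing of $\operatorname{sign}(\mu_\tau)$ with the parity of $T/\tau$, whether the inner term collapses to $\vartheta+(1-\vartheta)H^T$ or to $\vartheta-(1-\vartheta)H^T$, and hence whether \eqref{inp} or \eqref{inn} supplies the estimate. This is exactly where the parity hypothesis in the theorem is forced, and it is worth verifying the complementary pairings (e.g.\ a negative $\tau$-cycle with $T/\tau$ odd under~\eqref{inp}) produce an inner term of size $\approx 2$, whence $|\lambda_\tau|\ge1$, confirming that the listed cases are the only stabilizable ones. Everything else — persistence of the cycle and the multiplier formula — is routine once the identity $(f^{(T)})'(\eta_j)=\mu_\tau^{\,T/\tau}$ is in hand.
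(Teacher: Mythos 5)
Your proposal is correct and takes essentially the same route as the paper: both compute the subcycle multiplier $\mu_\tau\bigl(\vartheta+(1-\vartheta)\mu_\tau^{T/\tau}\bigr)^{\tau}$ via the chain rule and then resolve the sign/parity cases, your reformulation of~(\ref{inp}) and~(\ref{inn}) as $\bigl|\vartheta+(1-\vartheta)(\pm H^T)\bigr|<\frac{1}{H}$ being just a sharper phrasing of the paper's observation that the $T$-cycle stability inequality implies the $\tau$-cycle one exactly in the listed cases. Your additional checks (that the $\tau$-cycle persists as a cycle of~(\ref{ceq}), and that the complementary pairings give an inner factor of size $\approx 2$ and hence instability) are consistent with, and slightly more explicit than, what the paper writes.
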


\begin{proof}
Let $\left\{\eta_1,\ldots,\,\eta_T\right\}$ be a $T$-cycle and $\left\{\widehat{\eta}_1,\ldots,\,\widehat{\eta}_\tau\right\}$ a $\tau$-cycle of 
the system given by Equation~(\ref{eq}). Let $\mu_T = f'(\eta_T)\cdot\ldots\cdot f'(\eta_1),$ $\mu_\tau = f'(\widehat{\eta}_\tau)\cdot\ldots\cdot f'(\widehat{\eta}_1)$ 
be the corresponding multipliers of these cycles. These cycles will also be  cycles of Equation~(\ref{ceq}). We assume that the condition for local 
asymptotic stability of the $T$-cycle of Equation~(\ref{ceq})
\begin{equation}\label{tc}
\left| \mu_T \left( \vartheta + (1 - \vartheta) \mu_T\right)^T\right| < 1 
\end{equation}
is satisfied. Let us find the multiplier of the $\tau$-cycle of equation~(\ref{ceq}). Let, as before, 
$F(x) = f \left( \vartheta x + (1-\vartheta) f^{(T)}(x)\right),$ and $p=\frac{T}{\tau}.$ Calculate
\begin{gather*}
F'(\widehat{\eta}_j) = f'(\widehat{\eta}_j) \left( \vartheta + (1-\vartheta) \left( f'(\widehat{\eta}_\tau)\cdot\ldots\cdot f'(\widehat{\eta}_1) \right)^p \right) = \\
= f'(\widehat{\eta}_j) \left( \vartheta + (1-\vartheta) (\mu_\tau)^p \right),\; j=1,\ldots,\,\tau.
\end{gather*}
Then the multiplier of the $\tau$-cycle of equation~(\ref{ceq}) equals 
$F'(\widehat{\eta}_\tau)\cdot\ldots\cdot F'(\widehat{\eta}_1) = \mu_\tau \left( \vartheta + (1-\vartheta) (\mu_\tau)^p \right)^\tau$, and, accordingly, 
the condition for local asymptotic stability of the $\tau$-cycle of equation~(\ref{ceq}) is:
\begin{equation}\label{tauc}
\left| \mu_\tau \left( \vartheta + (1 - \vartheta) (\mu_\tau)^p \right)^\tau \right| < 1 
\end{equation}

Since $\mu_T = \pm H^T,$ $\mu_\tau = \pm H^\tau,$ the following cases are possible:

\begin{enumerate}
\item[a)] $\mu_T<0,$ $\mu_\tau<0,$ then 
$\left\{\begin{array}{ll}
\mu_T = (\mu_\tau)^p,\, p \text{ is odd}, \\
\mu_T = -(\mu_\tau)^p,\, p \text{ is even},
\end{array}\right.$

\medskip
\item[b)] $\mu_T<0,$ $\mu_\tau>0,$ then $\mu_T = -(\mu_\tau)^p$ for any $p,$  

\medskip
\item[c)] $\mu_T>0,$ $\mu_\tau<0,$ then 
$\left\{\begin{array}{ll}
\mu_T = -(\mu_\tau)^p,\, p \text{ is odd}, \\
\mu_T = (\mu_\tau)^p,\, p \text{ is even},
\end{array}\right.$

\medskip
\item[d)] $\mu_T>0,$ $\mu_\tau>0,$ then $\mu_T = (\mu_\tau)^p$ for any $p.$  
\end{enumerate}

Inequality~(\ref{tc}) implies inequality~(\ref{tauc}) if and only if one of the conditions holds: $\mu_T<0,$ $\mu_\tau<0,$ $p$ is odd; 
$\mu_T>0,$ $\mu_\tau<0,$ $p$ is even; $\mu_T>0,$ $\mu_\tau>0$ for any $p.$ The first condition is possible for the parameters $\vartheta$ that satisfy 
inequalities~(\ref{inp}), the second and third conditions are possible for the parameters $\vartheta$ that satisfy inequalities~(\ref{inn}), whence the conclusion 
of the Theorem follows.
\end{proof}

Consider again the example of stabilization of 5-cycles from Section \ref{sec:GraphicalStudy}.  Then, besides locally 
asymptotically stable 5-cycles, the fixed points at $\eta=0$ when $\vartheta = \frac{H^T + \frac{0.4}{H}}{H^T - 1}$ and $\eta=0.8$ when
$\vartheta = \frac{H^T - \frac{0.4}{H}}{H^T + 1}$ will also be stable. The initial points: $x_0=0.001$ and $x_0=0.801$ are in the basins of attraction of the corresponding fixed points, and their orbits will therefore converge to the fixed points rather than to one of the true period-5 cycles.  Periodic and fixed points are shown in Figure~\ref{f14}.

\begin{figure}[ht!]
\centering
\includegraphics[scale=0.28]{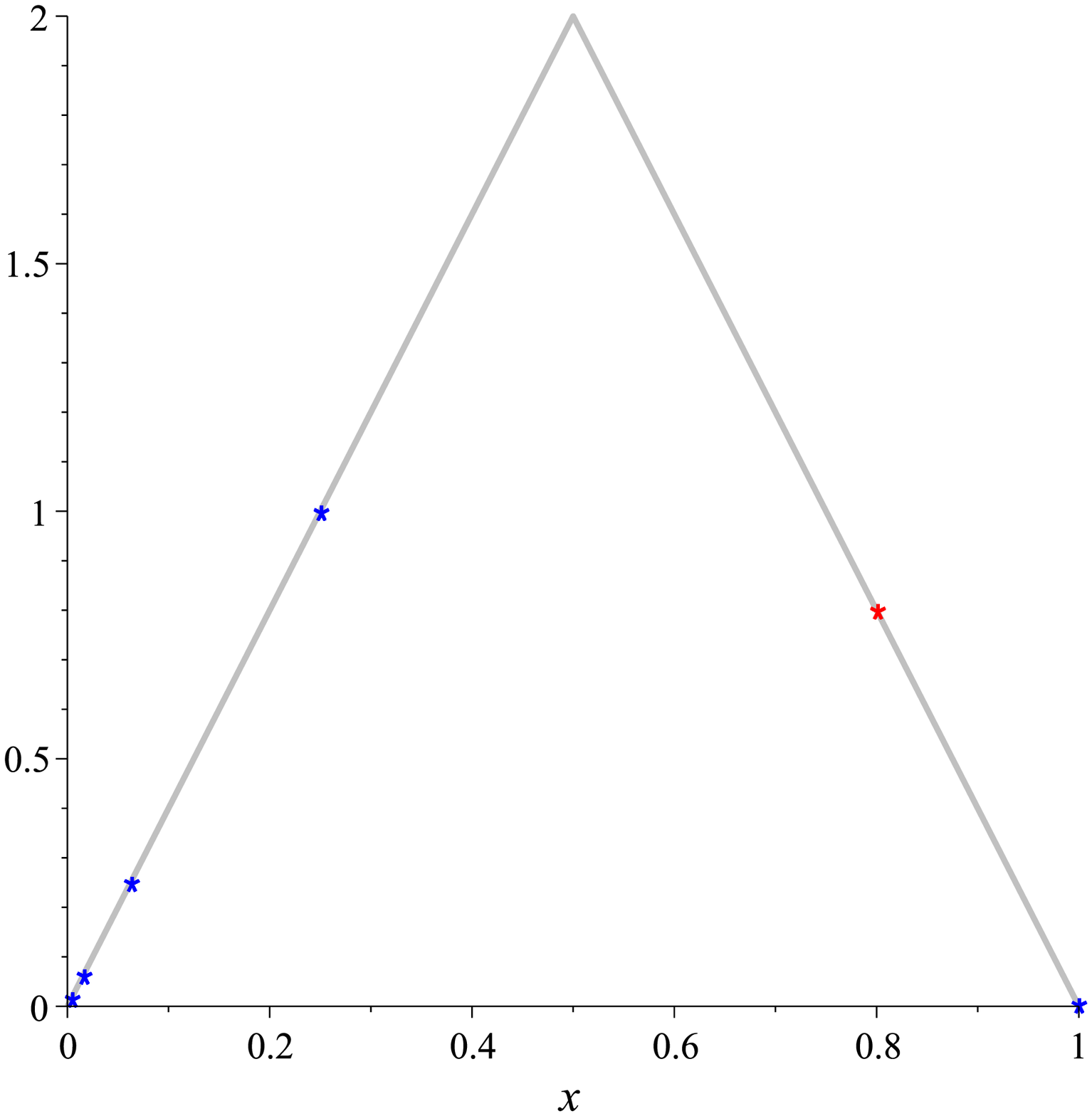}
\hspace{1cm}
\includegraphics[scale=0.28]{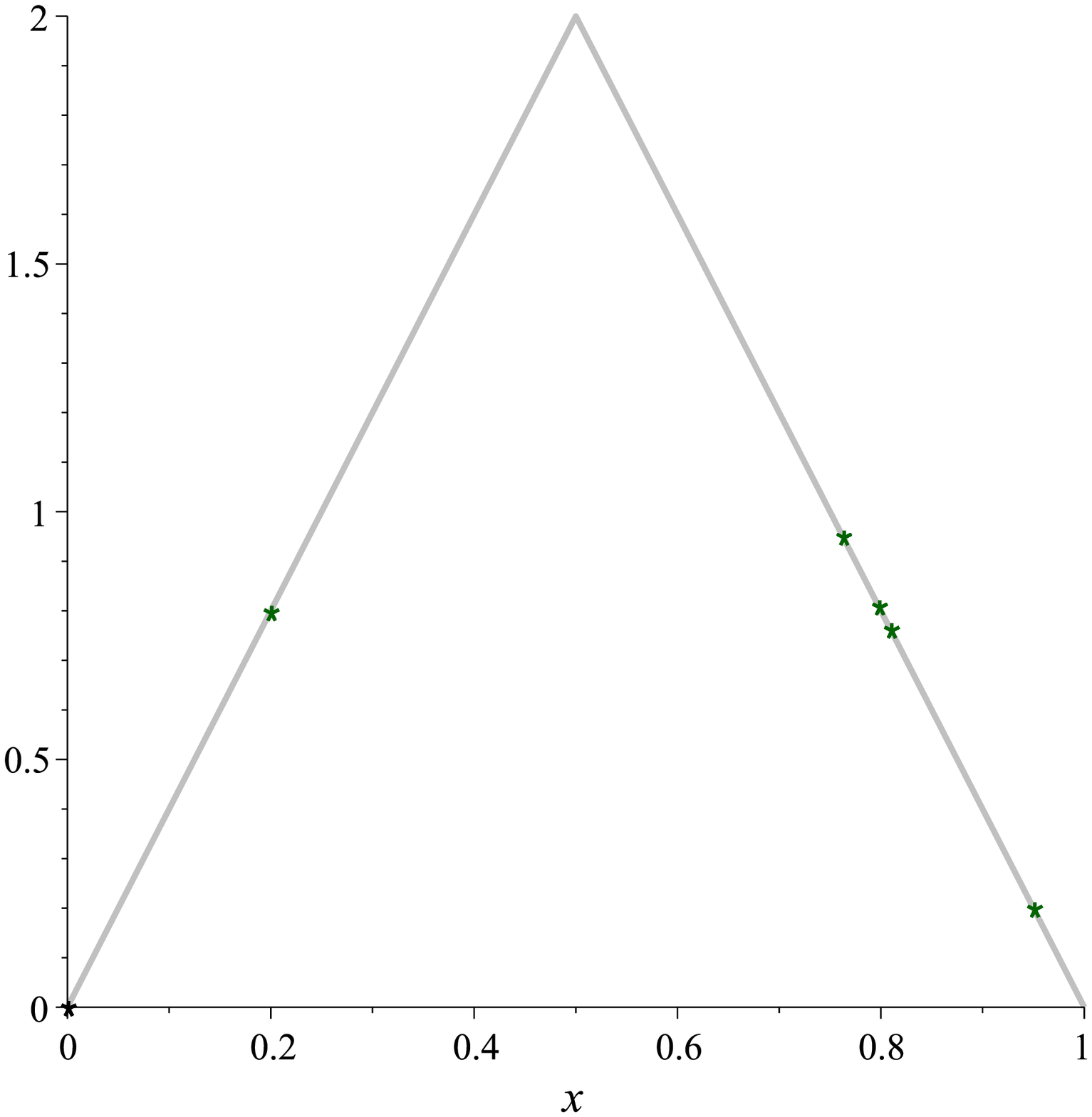}
\caption{Points of 5-cycles and fixed points of the tent map where $H = 4$. In the Left Panel, a 5-cycle with positive multiplier (blue) and the non-zero fixed point (red) are stabilized using $\vartheta = \frac{H^T + \frac{0.4}{H}}{H^T - 1} $.  In the Right Panel, a 5-cycle with negative multiplier (green) and the fixed point at 0 are stabilized using $\vartheta = \frac{H^T - \frac{0.4}{H}}{H^T + 1}$.  To put this example in the context of Theorem \ref{t4}, we note that in this case $T = 5$ and $\tau = 1$, so that $p = T/\tau = 5$ is odd.}\label{f14}
\end{figure}

\section{Distribution of cyclic points and visualization of the Cantor set} \label{sec:DistributionCyclicPoints}

As noted above, the invariant set of equation~(\ref{eq}) at $H=3$ is the classical Cantor middle thirds set. However, due to its strong instability, it is impossible to visualize 
the points of this set using equation~(\ref{eq}). Recall that the Cantor set can be defined as those real numbers in $[0,1]$ whose ternary expansions consist   only of 0's and 2's.  The Cantor set is characterized by two types of points: the points that are end-points of the open intervals that are adjacent 
to the Cantor set are called points of the first type (these are points which terminate in all 0's or all 2's when written in ternary expansion - they take the form $\frac{p}{3^k}$ for some natural numbers $p$ and $k$), while all the other points of the set are called points of the second type. Points of the first type are solutions of the equations $f^{(k)}(x) = 1,$ for $k=1,\,2,\ldots\,$. The set of points of the first type is countable. When $x<1/2$ and in the Cantor set (so that is, it's first digit in ternary expansion is 0), the tent map acts as a right shift map on the ternary expansion.  If $x>1/2$, the tent map acts on the ternary expansion by swapping the 2's and 0's, and then acting as a shift map. Among the points of the second type, we can select
a subset consisting of all periodic points of the system given by Equation ~\eqref{eq}. They are obtained as the union of all the roots of the equations $f^{(k)}(x) = x,$ $k=1,\,2,\ldots\,$. 
The set of periodic points of the map (\ref{tent}) is also countable.  Points whose ternary expansion is periodic will themselves be periodic because of the way the tent map acts as a shift on the ternary expansions.  Thus, there are periodic points arbitrarily close to points of the first type of the Cantor set. 

The following question arises: how are the periodic points of one orbit of a given period $T$  distributed through the Cantor set?  More precisely, how uniformly 
do the periodic points of the orbit of a given period $T$ fill the Cantor set? We do not consider the analytical solution of this problem in this paper, 
but we provide some examples simulating the density functions for the distribution of periodic point, and we compare them graphically  with 
the analogous function for a random sample of elements from the set of points of the first type in the Cantor set.

Let us take for example the period $T=1009,$ with an accuracy $10^{-525}$, initial value $x_0=0.555$, and values for the control parameter
$\vartheta = \frac{H^T \pm \frac{0.6}{H}}{H^T + 1}.$ We will get $2T=2018$ cyclic points. We then simulate the density function for the distribution 
of the cyclic points set (Figure~\ref{f15}-a). The graph shows that the estimated periodic points are not quite evenly distributed in the Cantor set. Let us now find \textit{two hundred} orbits with the period 
$T=1009,$ i.e., we get 20180 cyclic points. Surprisingly, the graph of the  simulated density function for the new set of points (Figure~\ref{f15}-b) does not differ much 
from the plot in the previous case. Finally, let us plot the density function for the distribution of randomly selected 200000 points of the first type of the Cantor set.
For this purpose, represent a subset of the points of the first type of the Cantor set in their ternary expansion $s = \sum\limits_{j=1}^{N}{\alpha_j}{\frac{2}{3^j}},$ where 
$\alpha_j \in \{ 0, 1 \}$.  If $N=25$, the maximum number of possible points is $2^{25}$. 
Let us randomly choose values for $\alpha_j$: either zero or one, and thus construct 200000 points. Next, we  graph the distribution  
of the resulting set (Figure~\ref{f16}). Comparison of the graphs in Figures~\ref{f15}, \ref{f16} shows that the points of the first type, constructed by the method outlined above, 
are distributed more evenly on the Cantor set. 
 
\begin{figure}[h!]
\begin{minipage}[h]{0.45\linewidth}
\center{\includegraphics[scale=0.28]{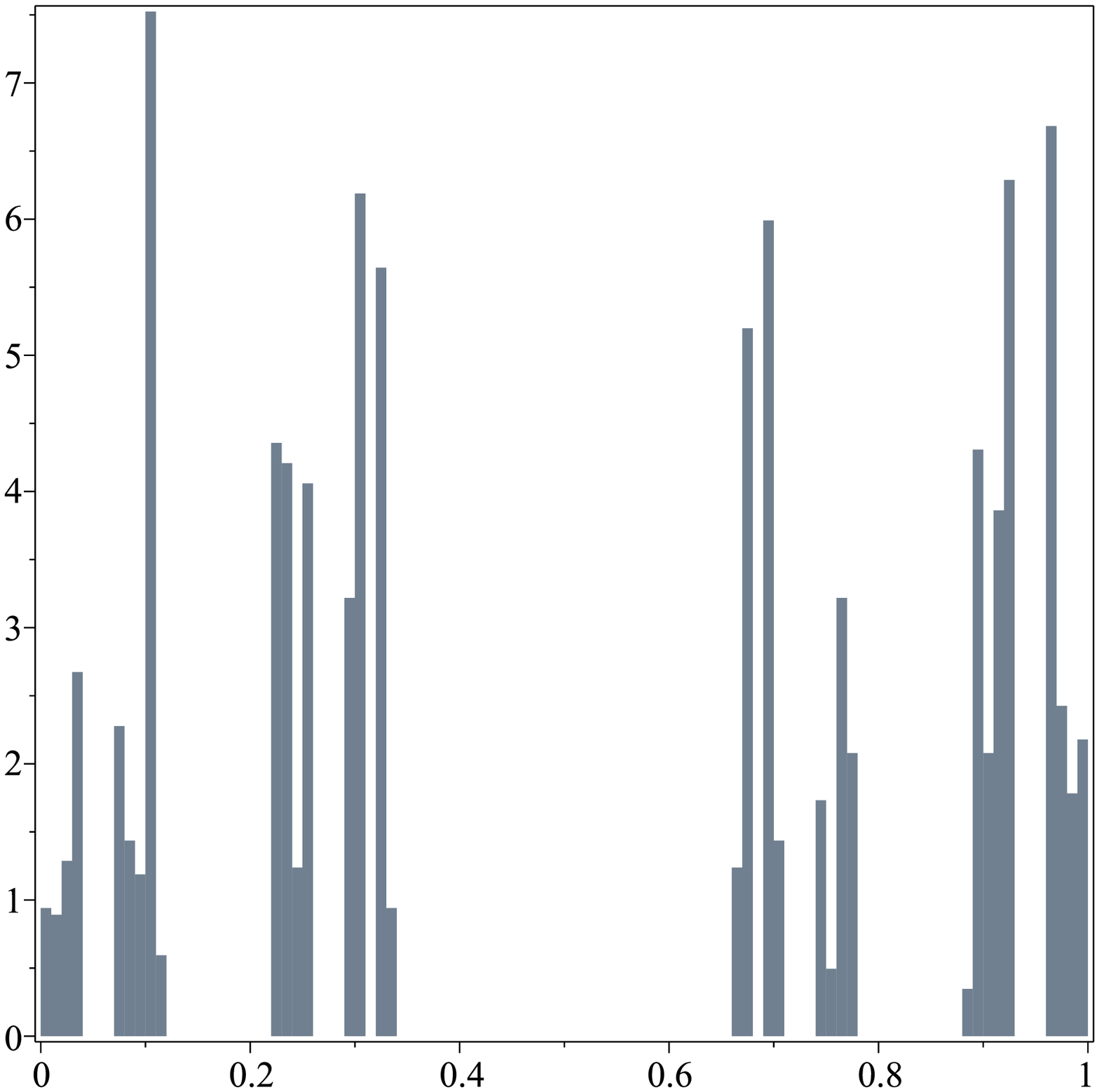} \\ a)}
\end{minipage}
\hspace{1cm}
\begin{minipage}[h]{0.45\linewidth}
\center{\includegraphics[scale=0.28]{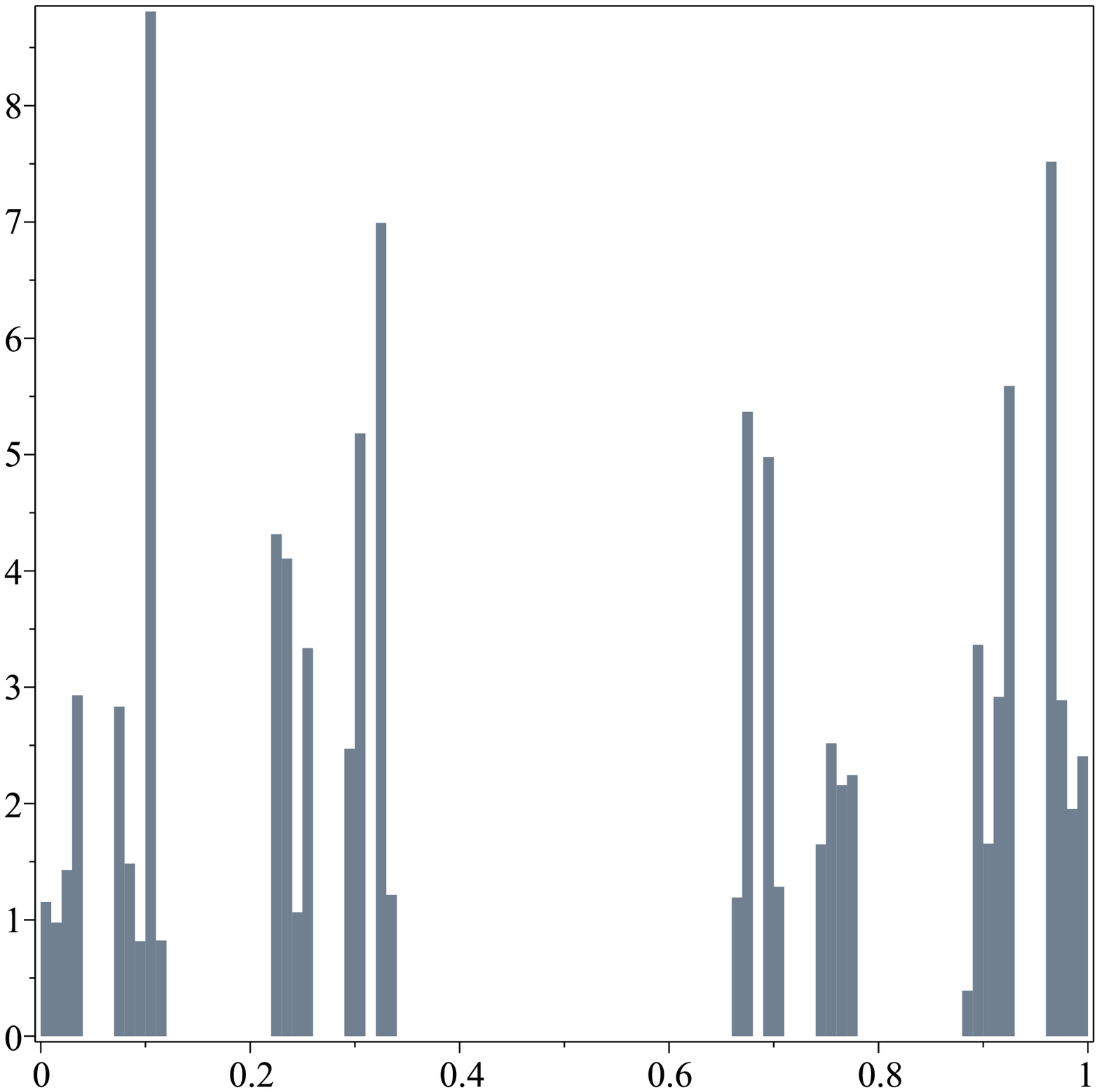} \\ b)}
\end{minipage}
\caption{The graph of the distribution density function of the set of cyclic points of two and two hundred 1009-periodic orbits of map~(\ref{tent}) at $H=3$} \label{f15}
\end{figure}

\begin{figure}[h!]
\centering
\includegraphics[scale=0.28]{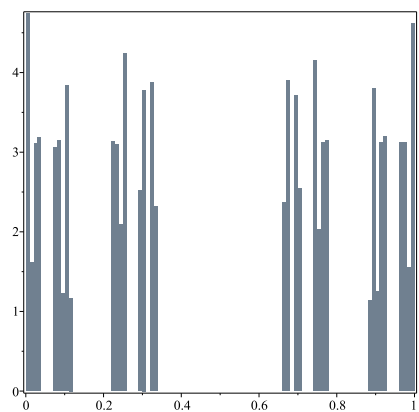}
\caption{Graph of the distribution density function of the set of 200000 random points of the first type of the Cantor set.  Note that the distribution is more uniform across the bins of the histogram.} \label{f16}
\end{figure}

\section{Conclusions} \label{sec:Conclusions}


A general predictive control framework was developed in \cite{DSI} for finding a given set of periodic orbits by making them locally stable.  This suggests a solution to the problem of numerically describing invariant sets \footnote{We consider the {\it largest} bounded set $\Gamma$, such that $f(\Gamma) \subseteq \Gamma$} of nonlinear dynamical systems, since periodic orbits are often dense in these invariant sets.

Since the periodic orbits themselves are repelling sets the problem is separated in two parts:
a local stabilization of periodic orbits of a given period, which could be fairly large, and then selection of an initial point in the basin of attraction of the stabilized periodic orbit.

If the invariant set is a global attractor, then all trajectories are bounded, and therefore most initial points are in the basin of attraction of one of the stabilized periodic orbits.

 If the invariant set is repelling, then the analogous problem turns out to be much more complicated: simple local stability 
of the controlled orbit is not sufficient, due to the fact that its basin of attraction can have a small measure, 
and a very complicated structure.  For example, it might not be simply connected, or it may have a fractal boundary. The orbits of most initial points will go to infinity. Therefore, the problem of choosing an initial point in the basin of attraction of a stabilized orbit becomes significant.

However, the method of generalized predictive control, used to stabilize the orbit, has an important characteristic: in addition to the local asymptotic stability 
of the orbits of controlled system, the rest of the orbits remain bounded for a sufficiently large set of initial values. In this paper we showed that, for the generalized tent map, it is possible to ensure 
that all solutions are bounded. The global behavior of solutions for the generalized logistic map, the generalized Lozi \cite{Lozi1}, Hénon \cite{Henon}, Ikeda \cite{Ikeda}, Elhadj-Sprott \cite{El-Spr} maps, etc. is somewhat more complicated. 
For such systems, we conjecture that it is necessary to choose the control parameter as a function of the current state of the system. 
Investigation of the global behavior of 
the controlled systems solutions for these maps is the task for future research.

\section{Acknowledgment}
The authors wish to thank Andrew Sills for his help in finding the reference to Gauss' theorem. 

\bibliographystyle{unsrt}
\bibliography{tentbib}

\end{document}